\documentclass[a4paper,11pt]{article}
\usepackage[a4paper, total={6.5in, 10in}]{geometry}
\usepackage{amsfonts,amsmath,amssymb,amsthm,enumerate,bm}
\usepackage{xcolor}

\allowdisplaybreaks[4] 
\parindent 2em

\pretolerance=9999
\hyphenpenalty=9999
\theoremstyle{plain}

\newtheorem{theorem}{\noindent\bf Theorem}[section]

\newtheorem{lemma}[theorem]{\noindent\bf Lemma}

\theoremstyle{remark}
\newtheorem{remark}[theorem]{\noindent\bf Remark}
\theoremstyle{definition}

\numberwithin{equation}{section}

\headsep=2.25em
\mathsurround2pt

\def\be{\begin{eqnarray}}
\def\ee{\end{eqnarray}}
\def\ben{\begin{eqnarray*}}
\def\een{\end{eqnarray*}}
\def\benum{\begin{enumerate}}
\def\eenum{\end{enumerate}}
\newcommand{\lr}{\left(}
\newcommand{\rr}{\right)}

\newcommand{\lge}{\left\{ }
\newcommand{\rge}{\right\} }

\title{\bf \Large {Multiplication operators on amalgam of  $l^{q), \theta}$ and $L^{p}$}}
\author{ Monika Singh \footnote{The corresponding author.}  \hspace{2pt} and Jitendra Kumar} 

\date{}
\begin{document}
\maketitle
\begin{abstract}
\noindent

We define the grand amalgam Lebesgue function space $l^{q), \theta}(L^p),$ and study the fundamental structural properties of the space, including completeness. Then we define the small Lebesgue sequence space and study its function space properties. Furthermore, we prove a version of  the H\"{o}lders inequality on the frame work of these spaces. Finally, we study the multiplication operator in the setting of these spaces.

\bigskip
\noindent 2010 \emph{AMS Subject Classification.} 26D10,26D15, 46E35.\\
\bigskip

\noindent\emph{Key words and Phrases.} Grand amalgam Lebesgue function space, Grand sequence space, small Lebesgue sequence space, H\"{o}lder’s inequality, Multiplication operator.
\end{abstract}

\section{Introduction}

Let $\mathbb N, ~\mathbb N_0,~ \mathbb Z,~ \mathbb R$ and $\mathbb C$ denote set of positive integers, non-negative integers, integers, real numbers and complex numbers, respectively. For $1 \leq p < \infty$ and  $1 \leq q < \infty$, the classical amalgam  space, denoted by $l^q (L^p),$ consist of all measurable functions $f: \mathbb R \rightarrow \mathbb C$ that are locally in $L^p$  and globally have $l^q$ behavior. That is the function $f \in L^p(I_n),$ for each $n \in \mathbb Z,$ where $I_n=[n,n+1),$  and the sequence of local $L^p$ norm lies in $l^q.$ Precisely, $ f \in l^q (L^p),$ if  
\be\label{kll}
\|f\|_{l^q (L^p)}:= \lr\displaystyle\sum_{n=-\infty}^\infty \lr\int_{I_n} |f(x)|^p dx\rr^{\frac{q}{p}}\rr^{\frac{1}{q}}=\big\|\| f \chi_{I_n}\| _{L^p}\big\|_{l^q}.
\ee
These spaces were first introduced by Norbert Wiener \cite{w1, w2} for specific values of $p$ and $q$ while a more systematic study was later carried out by F. Holland \cite{ho}. Further, for historical background and applications of these spaces in various contexts can be found in \cite{fs} and \cite{he}.

In 1992, Iwaniec and Sbordone \cite{is} introduced grand Lebesgue spaces in search of finding the minimal conditions for the integrability of the Jacobian. Let $1 < q < \infty$ and $I=(0,1),$ the grand Lebesgue space, $L^{q)}(I)$, is a space of all measurable functions such that 
\be\label{1.3}
 \|f\|_{L^{q)}}:= \sup_{0<\varepsilon < q-1} \varepsilon^{\frac{1}{q-\varepsilon}} \lr\int_0^1 |f(x)|^{q-\varepsilon} dx\rr^{\frac{1}{q-\varepsilon}} = \sup_{0<\varepsilon < q-1} \varepsilon^{\frac{1}{q-\varepsilon}} \|f\|_{L^{q-\varepsilon}}< \infty.
\ee
The terminology \emph{grand} arises from the following chain of embeddings:
\be\label{1.1}
L^q(I) \hookrightarrow L^{q)}(I) \hookrightarrow L^{q-\varepsilon}(I), ~~ 0<\varepsilon < q-1.   
\ee
The middle embedding in \eqref{1.1} is strict. For instance $x^{\frac{-1}{q}} \in L^{q)}(I),$ but it does not belong to the standard $L^q(I)$.  The structural properties of these spaces were studied by A. Fiorenza in \cite{fi}  and \cite{fk}. Currently, these spaces remain an active area of research and attract significant interest from many scholars. I, along with several others, have investigated these spaces from various perspectives. For further details, one may refer to \cite{jss2}, \cite{kmrs} and references there in. 

In 2018, the discrete version of the grand Lebesgue spaces is defined by Rafeiro, Samko and Umarkhadzhiev \cite{rsu}.  Let $X$ denote either $\mathbb N,~ \mathbb N_0$ or  $\mathbb Z.$ For $1\leq q< \infty$ and $\theta>0$, grand Lebesgue sequence space, $l^{q), \theta}$, consist of all real sequences $x=\lge x_n \rge_{n \in X},$   such that 
\be\label{kl}
 \|x\|_{l^{q),\theta}} := \displaystyle\sup_{\varepsilon>0 } \lr\varepsilon^\theta \sum_{n \in X} |x_n|^{q(1+\varepsilon)}\rr^{\frac{1}{q(1 + \varepsilon)}}=\displaystyle \sup_{ \varepsilon>0 } ~ \varepsilon^{\frac{\theta}{q(1 + \varepsilon)}} \|x\|_{l^{q{(1 + \varepsilon)}}} <\infty.
\ee
In \cite{rsu}, authors have shown that the range of $\varepsilon >0$ in the norm $\|x\|_{l^{q),\theta}}$ can be taken on the finite interval $(0,~ \varepsilon_0),$ for some $\varepsilon_0 \in (0, \infty ).$ Precisely, they have shown that  the norm $\|x\|_{l^{q),\theta}}$ is equivalent to the norm
\be\label{kmm}
\|x\|_{l^{q),\theta}, \varepsilon_0}:=\displaystyle \sup_{0< \varepsilon\le\varepsilon_0 } ~ \varepsilon^{\frac{\theta}{q(1 + \varepsilon)}} \|x\|_{l^{q{(1 + \varepsilon)}}}.
\ee
Also, embeddings \eqref{1.1} hold in the discrete case also.
\begin{equation} \label{1.2}
 l^{q(1-\sigma)} ~ \hookrightarrow ~l^q ~ \hookrightarrow~l^{q),\theta_1}~\hookrightarrow~ l^{q),\theta_2} ~\hookrightarrow ~l^{q(1+\delta)} 
\end{equation}
for $0 < \sigma < \frac{1}{q'}$, $\delta > 0$,$0 < \theta_1 \leq \theta_2,$ where $\frac{1}{q}+\frac{1}{q'}=1.$ Second embedding in \eqref{1.2} serves as the theoretical justification for the definition of \emph{grand} Lebesgue  sequence space. In the paper \cite{t1}, author has also introduced grand Lebesgue sequence space, where the norm is defined in a discrete form analogous to the norm given in \eqref{1.3}. For clarity, we denote this norm by  $\|.\|^1_{l^{q),\theta}}.$
\be
\nonumber \|x\|^1_{l^{q),\theta}}:=\displaystyle \sup_{0< \varepsilon ~\le q-1} ~ \varepsilon^{\frac{\theta}{q- \varepsilon}} \|x\|_{l^{q- \varepsilon}}.
\ee 
But with this definition of norm, the following embedding hold:
\be\label{1.5}
\nonumber l^1 \hookrightarrow l^{q)} \hookrightarrow l^q.
\ee
That is the grand Lebesgue sequence space is embedded in the Lebesgue sequence space. Also, this embedding is strict. Let $1 < \alpha < q,$ and $x=\lge(\frac{1}{n})^{\frac{\alpha}{q}}\rge_{n \in \mathbb N}.$ Clearly $ x\in l^q$ but $x \notin l^{q)}$ because

\begin{align*}
 \|x\|^1_{l^{q),\theta}} = &\displaystyle \sup_{0< \varepsilon ~\le q-1} ~ \varepsilon^{\frac{\theta}{q- \varepsilon}} \|x\|_{l^{q- \varepsilon}}\\
= & \displaystyle \sup_{0< \varepsilon ~\le q-1} ~ \varepsilon^{\frac{\theta}{q- \varepsilon}} \lr\sum_{n \in \mathbb N}\lr\lr\frac{1}{n}\rr^{\frac{\alpha}{q}}\rr^{q-\varepsilon}\rr^{\frac{1}{q-\varepsilon}}\\
\ge & \varepsilon_0^{\frac{\theta}{q- \varepsilon_0}} \lr\sum_{n \in \mathbb N}\lr\lr\frac{1}{n}\rr^{\frac{\alpha}{q}}\rr^{q-\varepsilon_0}\rr^{\frac{1}{q-\varepsilon_0}}=\infty,
\end{align*}
where $\varepsilon_0= q- \frac{q}{\alpha} < q-1.$ In \cite{t1},  the author amalgamated the grand $L^{p)}$ space with itself, termed the resulting space as the grand Wiener amalgam spaces, and studied the various space properties of these amalgamated spaces. In a subsequent work \cite{t2} the author introduced a discrete version of the previously defined spaces and further investigated some properties with in this new framework.

Many of the researchers have explored multiplication operator in the frame work of various function spaces viz., amalgam spaces with variable setting, grand Lorentz sequence space, mixed norm Lebesgue spaces etc, one may refer  \cite{ak}, \cite{o}, \cite{ch} and references therein.
In this paper, we also study the multiplication operator in the frame work of amalgam of $l^{q), \theta}$ and $L^{p}$ spaces. To work on the primary objective, we first establish the space  $l^{q), \theta}(L^p)$ in Section 2, and refer to it as the \emph{ Grand amalgam Lebesgue function space.} We begin by studying various inclusion properties of these spaces in the context of the parameters $p, q, \theta.$ Subsequently, we prove that  $l^{q), \theta}(L^p)$ is a  Banach space. To study the  version of the Hölder’s inequality within this framework, we need to define the  auxiliary space of the grand amalgam Lebesgue function space. In this direction, in Section 3, first we define the \emph{small Lebesgue sequence space} and study its various structural viz., Banach function space properties. Finally, in Section 4, we explore the boundedness of the multiplication operators on these spaces.

Throughout the paper, whenever we refer any of the spaces such as $l^q,$
$l^{q), \theta},$ $l^q(L^p)$ etc,  the index set of the sequences is to be taken $X,$ unless stated otherwise. In case of examples, we shall specify it explicitly. We denote  positive constants by the same symbol  $c,$ which may take different values from line to line within the same argument.

\section{A Banach space $l^{q), \theta}(L^p)$ and inclusion properties }

In this section we define the grand amalgam Lebesgue function spaces denoted by $l^{q), \theta}(L^p)$, and study their fundamental  properties. We establish that these spaces are Banach spaces. Furthermore, we explore the inclusion relation among the grand amalgam Lebesgue function spaces that arise from  variation in  the parameters $p, q, \theta.$ 

  Let  $1 \leq p < \infty,$  $1 \leq q < \infty$, and $\theta > 0$ then the \emph{grand amalgam Lebesgue function space} $l^{q), \theta}(L^p)$ is the collection of all complex valued measurable functions $g$  defined on $\displaystyle\bigcup_{k\in X} I_k,$ where $I_k=[k, k+1),$ with $ g\chi_{I_k} \in L^p,$ for each $ k \in X$ and  the sequence of local $L^p$-norm lies in $\displaystyle\bigcap_{\varepsilon>0}l^{q(1+\varepsilon)},$  such that
\begin{equation}\label{eq10}
 \|g\|_{p,q),\theta} := \sup_{\varepsilon > 0} \lr {\varepsilon^{\theta}}  \sum_{k \in  X} \left(\int_{k}^{k+1} |g(x)|^{p} \, dx \right)^ \frac{q(1+\varepsilon)}{p}\rr^\frac{1}{{q(1+\varepsilon)}} < \infty.
\end{equation}
It can also be written as 
\be
\nonumber\|g\|_{p,q),\theta}=\sup_{\varepsilon > 0}~ \varepsilon^{\frac{\theta}{q(1+\varepsilon)}} \big\| \| g \chi_{I_k}\|_{L^p} \big\|_{l^{q(1+\varepsilon)}}=\sup_{\varepsilon > 0}~ \varepsilon^{\frac{\theta}{q(1+\varepsilon)}} \| g \|_{l^{q(1+\varepsilon)}({L^p})},
\ee
as well as
\be\label{km}
\|g\|_{p,q),\theta}=\sup_{\varepsilon > 0}~ \varepsilon^{\frac{\theta}{q(1+\varepsilon)}} \big\| \| g \chi_{I_k}\|_{L^p} \big\|_{l^{q(1+\varepsilon)}}=\big\|\| g \chi_{I_k}\|_{L^p}\big\|_{l^{q), \theta}}.
\ee
by \eqref{kll} and \eqref{kl}.
\noindent We give an example of a measurable function which is in $l^{q), \theta}(L^p).$\\
{\bf Example 1.} 
Let  
$g(x) = \displaystyle\sum_{n \in \mathbb{N}} n^{-\frac{1}{q}}~ \lr\ln(n+1)\rr^{-a} \chi_{I_n}(x),$ where $ a \in \mathbb R.$ Consider,
\begin{align*}
\|g\|_{p,q),\theta} & = \sup_{\varepsilon > 0} \lr {\varepsilon^{\theta}}  \sum_{n \in \mathbb{N}} \left(\int_{n}^{n+1} |g(x)|^{p} \, dx
\right)^ \frac{q(1+\varepsilon)}{p}\rr^\frac{1}{{q(1+\varepsilon)}}\\
& =\sup_{\varepsilon > 0} \lr {\varepsilon^{\theta}}  \sum_{n \in \mathbb{N}} \lr n^{-\frac{1}{q}} \lr\ln(n+1)\rr^{-a} \rr^ {q(1+\varepsilon)}\rr^\frac{1}{{q(1+\varepsilon)}}\\
& = \|x\|_{l^{q), \theta}}, 
\end{align*}
where $x= \lge n^{-\frac{1}{q}} \lr\ln(n+1)\rr^{-a}\rge_{n \in \mathbb N}.$ By  Lemma 3.4 \cite{rsu}, $ \|x\|_{l^{q), \theta}}$ is finite if and only if 
$\frac{1 - \theta}{q} \leq a \leq \frac{1}{q}.$ Therefore, $g \in l^{q), \theta}(L^p)$ if and only if 
$\frac{1 - \theta}{q} \leq a \leq \frac{1}{q}.$ In particular, for $a=0,$ we have  $g(x)= \displaystyle\sum_{n \in \mathbb{N}} n^{-\frac{1}{q}}~\chi_{I_n}(x) \in l^{q), \theta}(L^p)$ if and only if $\theta \ge 1.$

 Now, we prove that grand amalgam Lebesgue function space is a Banach space. To begin with, we show that it is a normed space. This follows from the fact that both Lebesgue sequence space $l^q$ and Lebesgue function space $L^p$ are normed spaces for $q,~p \ge 1.$ However, for the sake of completion, we provide a proof below.

\begin{theorem} \label{thm 1}
 Let $1 \leq p < \infty$, $1 \leq q < \infty$ and $\theta >0$ then $l^{q), \theta}(L^p)$ is a normed space.
\end{theorem}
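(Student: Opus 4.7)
The plan is to verify the three norm axioms for $\|\cdot\|_{p,q),\theta}$ by transferring the known norm properties of $L^p$ (for $p\ge 1$) and $l^{q(1+\varepsilon)}$ (for $q(1+\varepsilon)\ge 1$) through the supremum over $\varepsilon>0$. Non-negativity is immediate from the definition. For definiteness, if $\|g\|_{p,q),\theta}=0$ then for any single fixed $\varepsilon_0>0$ the positive factor $\varepsilon_0^{\theta/(q(1+\varepsilon_0))}$ forces $\|g\chi_{I_k}\|_{L^p}=0$ for every $k\in X$, hence $g=0$ a.e. on each $I_k$ and thus on the entire domain $\bigcup_{k\in X} I_k$. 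Absolute homogeneity $\|\alpha g\|_{p,q),\theta}=|\alpha|\,\|g\|_{p,q),\theta}$ follows by pulling $|\alpha|$ out of $\|\alpha g\chi_{I_k}\|_{L^p}$ and then out of the $l^{q(1+\varepsilon)}$-norm and the supremum.

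The main step is the triangle inequality. For $f,g\in l^{q),\theta}(L^p)$ and for a fixed $\varepsilon>0$, I would first apply Minkowski's inequality in $L^p$ on each interval $I_k$ to obtain
\begin{equation*}
\|(f+g)\chi_{I_k}\|_{L^p}\le \|f\chi_{I_k}\|_{L^p}+\|g\chi_{I_k}\|_{L^p},\qquad k\in X.
\end{equation*}
Then, since $q(1+\varepsilon)\ge 1$, I would apply Minkowski's inequality in $l^{q(1+\varepsilon)}$ to the resulting non-negative sequences and obtain
\begin{equation*}
\bigl\|\|(f+g)\chi_{I_k}\|_{L^p}\bigr\|_{l^{q(1+\varepsilon)}}\le \bigl\|\|f\chi_{I_k}\|_{L^p}\bigr\|_{l^{q(1+\varepsilon)}}+\bigl\|\|g\chi_{I_k}\|_{L^p}\bigr\|_{l^{q(1+\varepsilon)}}.
\end{equation*}
Multiplying by the positive scalar $\varepsilon^{\theta/(q(1+\varepsilon))}$ and taking the supremum over $\varepsilon>0$, I use the elementary inequality $\sup_\varepsilon (A(\varepsilon)+B(\varepsilon))\le \sup_\varepsilon A(\varepsilon)+\sup_\varepsilon B(\varepsilon)$ to conclude $\|f+g\|_{p,q),\theta}\le \|f\|_{p,q),\theta}+\|g\|_{p,q),\theta}$.

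I do not anticipate any serious obstacle: the argument is essentially a two-level lifting of Minkowski's inequality, and the weight $\varepsilon^{\theta/(q(1+\varepsilon))}$ plays no role beyond being a positive scalar that can be factored inside the sup. The only point requiring mild attention is making each estimate hold uniformly in $\varepsilon>0$ before taking the supremum, so that the final subadditivity of $\sup$ applies cleanly; the identity \eqref{km} already ensures that the quantity $\|g\|_{p,q),\theta}$ inherits its structure from the scalar-valued grand sequence norm $\|\cdot\|_{l^{q),\theta}}$ applied to the $L^p$-norm sequence, which makes the verification conceptually transparent.
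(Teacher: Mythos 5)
Your proposal is correct and follows essentially the same route as the paper: non-negativity and definiteness by reducing to the $L^p$ and $l^{q(1+\varepsilon)}$ norms, homogeneity by factoring the scalar through both levels, and the triangle inequality by applying Minkowski first in $L^p$ on each $I_k$, then in $l^{q(1+\varepsilon)}$, and finally using subadditivity of the supremum over $\varepsilon>0$. No gaps; the only cosmetic difference is that you fix a single $\varepsilon_0$ in the definiteness step where the paper quantifies over all $\varepsilon>0$, which is immaterial.
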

\begin{proof}Let $g, ~f \in l^{q), \theta}(L^p)$ and $ \lambda $ be a scalar. 
\begin{enumerate}[(a)]
\item \emph{Non-negativity:} As $\|g\chi_{I_k}\|_{L^p} \geq 0,$ for all $k \in  X,$ consequently we have $\big\| \| g \chi_{I_k}\|_{L^p} \big\|_{l^{q(1+\varepsilon)}} \ge 0$, for every $\varepsilon >0,$ hence $\|g\|_{p,q),\theta} \geq 0.$

\item Suppose $\|g\|_{p,q),\theta} = 0.$ I.e., $\displaystyle\sup_{\varepsilon > 0}~ \varepsilon^{\frac{\theta}{q(1+\varepsilon)}} \big\| \| g \chi_{I_k}\|_{L^p} \big\|_{l^{q(1+\varepsilon)}}=0.$ This implies that $\big\| \| g \chi_{I_k}\|_{L^p} \big\|_{l^{q(1+\varepsilon)}} = 0$, for every $\varepsilon >0.$ Since Lebesgue sequence space $l^{q(1+\varepsilon)}$ and $L^p$ space are normed spaces, we have $g=0$ almost everywhere. Converse can be obtained trivially. Hence, $\|g\|_{p,q),\theta} = 0 $ if and only if  $g = 0$ almost everywhere.

\item \emph{Homogeneity:} 
\begin{align*}
 \|\lambda g\|_{p,q),\theta} &= \displaystyle\sup_{\varepsilon > 0}~ \varepsilon^{\frac{\theta}{q(1+\varepsilon)}} \big\| \| \lambda g \chi_{I_k}\|_{L^p} \big\|_{l^{q(1+\varepsilon)}}
= \displaystyle\sup_{\varepsilon > 0}~ \varepsilon^{\frac{\theta}{q(1+\varepsilon)}} \big\| |\lambda| \| g \chi_{I_k}\|_{L^p} \big\|_{l^{q(1+\varepsilon)}}\\
&=   \displaystyle\sup_{\varepsilon > 0}~ \varepsilon^{\frac{\theta}{q(1+\varepsilon)}} |\lambda|\big\| \| g \chi_{I_k}\|_{L^p} \big\|_{l^{q(1+\varepsilon)}}= |\lambda|\| g\|_{p,q),\theta} 
\end{align*}

\item \emph{Triangle inequality:} 
Consider,
\begin{align*}
\| f + g \|_{p,q),\theta}  &= \displaystyle\sup_{\varepsilon > 0}~ \varepsilon^{\frac{\theta}{q(1+\varepsilon)}} \big\| \| (f+g) \chi_{I_k}\|_{L^p} \big\|_{l^{q(1+\varepsilon)}}\\
 &\le \displaystyle\sup_{\varepsilon > 0}~ \varepsilon^{\frac{\theta}{q(1+\varepsilon)}} \big\| \| f \chi_{I_k}\|_{L^p}+\| g \chi_{I_k}\|_{L^p} \big\|_{l^{q(1+\varepsilon)}}\\
&\le \displaystyle\sup_{\varepsilon > 0}~ \varepsilon^{\frac{\theta}{q(1+\varepsilon)}} \left( \big\| \| f \chi_{I_k}\|_{L^p} \big\|_{l^{q(1+\varepsilon)}}+\big\| \| g \chi_{I_k}\|_{L^p} \big\|_{l^{q(1+\varepsilon)}}\right) \\
& \leq \| f \|_{p,q),\theta} + \| g \|_{p,q),\theta}.
\end{align*}
\end{enumerate}
\end{proof}
To achieve the goal that $l^{q), \theta}(L^p)$ space is a Banach space, next we show that grand Lebesgue squence space $l^{q), \theta}$ is a Banach space.

\begin{remark}
In the paper \cite{o}, the author has defined the grand  Lorentz sequence space and proved its completeness. The completeness of the space \( l^{q), \theta }\) can also be obtained by applying similar arguments as used in  \cite{o} with the same choice of exponents. But here we provide an alternate proof for it.
\end{remark}

\begin{theorem}\label{th1}
For $\theta >0$ and $ 1 \leq q < \infty ,$ the space $ l^{q), \theta }$  is a Banach space.
\end{theorem}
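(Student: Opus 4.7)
The plan is to show completeness by a standard embedding argument, exploiting that each $l^{q(1+\varepsilon)}$ is already known to be complete. Let $\{x^{(m)}\}_{m\in\mathbb N}$ be a Cauchy sequence in $l^{q),\theta}$. The key observation is that by the very definition of the norm, for every fixed $\varepsilon>0$ one has
\[
\varepsilon^{\frac{\theta}{q(1+\varepsilon)}}\,\|x^{(m)}-x^{(n)}\|_{l^{q(1+\varepsilon)}}\;\leq\;\|x^{(m)}-x^{(n)}\|_{l^{q),\theta}},
\]
so $\{x^{(m)}\}_{m}$ is Cauchy in the classical Banach sequence space $l^{q(1+\varepsilon)}$ for \emph{each} $\varepsilon>0$. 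In particular, extracting a single coordinate gives a Cauchy sequence of scalars, so the pointwise limit $x_k:=\lim_{m\to\infty}x^{(m)}_k$ exists for every $k\in X$; denote $x=\{x_k\}_{k\in X}$.

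Next I would identify this pointwise limit with the $l^{q(1+\varepsilon)}$-limit of $\{x^{(m)}\}_m$ for each $\varepsilon>0$. Since $l^{q(1+\varepsilon)}$ is complete, the Cauchy sequence $\{x^{(m)}\}_m$ has a limit $y^{\varepsilon}\in l^{q(1+\varepsilon)}$; norm convergence in $l^{q(1+\varepsilon)}$ forces coordinatewise convergence, hence by uniqueness $y^{\varepsilon}=x$ for every $\varepsilon>0$.

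With this in hand, the completeness follows from a limit-inside-supremum argument. Given $\eta>0$, pick $N$ such that $\|x^{(m)}-x^{(n)}\|_{l^{q),\theta}}<\eta$ for all $m,n\geq N$; then for every $\varepsilon>0$ and all $m,n\geq N$,
\[
\varepsilon^{\frac{\theta}{q(1+\varepsilon)}}\,\|x^{(m)}-x^{(n)}\|_{l^{q(1+\varepsilon)}}\;<\;\eta.
\]
Letting $n\to\infty$ and using $x^{(n)}\to x$ in $l^{q(1+\varepsilon)}$ (from the previous paragraph), I get $\varepsilon^{\theta/(q(1+\varepsilon))}\|x^{(m)}-x\|_{l^{q(1+\varepsilon)}}\leq \eta$ for every $\varepsilon>0$. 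Taking the supremum over $\varepsilon>0$ yields $\|x^{(m)}-x\|_{l^{q),\theta}}\leq \eta$ for $m\geq N$, which simultaneously shows $x-x^{(N)}\in l^{q),\theta}$ (hence $x\in l^{q),\theta}$) and $x^{(m)}\to x$ in $l^{q),\theta}$.

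The only delicate point is the passage to the limit in $n$ inside the expression $\varepsilon^{\theta/(q(1+\varepsilon))}\|x^{(m)}-x^{(n)}\|_{l^{q(1+\varepsilon)}}$; this is exactly where the consistency of the $l^{q(1+\varepsilon)}$-limits with the common pointwise limit $x$ is used, and it is why I isolated that step first. Everything else reduces to the completeness of the classical $l^{q(1+\varepsilon)}$ spaces and the continuity of the norm under taking suprema, which are routine.
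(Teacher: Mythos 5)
Your proof is correct and follows essentially the same route as the paper: reduce to the completeness of each classical space $l^{q(1+\varepsilon)}$, identify a single limit $x$ common to all these spaces, and lift the Cauchy estimate back to the grand norm. The only divergence is in the closing step, where you pass to the limit in $n$ for each fixed $\varepsilon$ and then take the supremum; this is a cleaner finish than the paper's argument via a near-maximizing $\varepsilon_0$ and a $\delta/3$ decomposition, and it avoids manipulating $\|x_n-x\|_{l^{q),\theta}}$ before its finiteness is established.
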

\begin{proof}
Let \(\{x_n\}_{n \in X}\) be a Cauchy sequence in \( l^{q), \theta} .\) 
Let $ \delta > 0 $ be given.
Since \(\{x_n\}\) is a Cauchy sequence in \( l^{q), \theta },\) for given \( \delta > 0 \) there exists an integer \( M \) such that
\begin{equation}\label{9}
\|x_n - x_m\|_{l^{q),\theta}}  < \frac{\delta}{3}, \quad \text{for all}~ n, m  \ge M.
\end{equation}
I.e., 
\be
\nonumber \sup_{\varepsilon > 0} ~{\varepsilon}^\frac{\theta}{q(1+\varepsilon)} \|x_n - x_m\|_{l^{q(1+\varepsilon)}
} < \frac{\delta}{3}, \quad \text{for all}~ n, m  \ge M.
\ee
This implies that all for $\varepsilon>0,$ we have
\be 
 \nonumber {\varepsilon}^\frac{\theta}{q(1+\varepsilon)} \|x_n - x_m\|_{l^{q(1+\varepsilon)}
} < \frac{\delta}{3},\quad \text{for all}~ n, m  \ge M.
\ee
Therefore,  for all \( \varepsilon > 0, \)
$\{x_n\}$ is a Cauchy sequence in the space $l^{q{(1+\varepsilon)}}.$
Since $ l^{q(1+\varepsilon)} $ is a complete normed space, it follows that $\{x_n\}_{n \in X}$ converges in this space, say to  $x \in  l^{q(1+\varepsilon)}.$ 
Note that this limit $x$ will be independent of $\varepsilon>0$ because for  \(0< \varepsilon_1 < \varepsilon_2, \) we have $
l^{q(1+\varepsilon_1)} \subset l^{q(1+\varepsilon_2)}.$

\noindent Now, we need to show that \( x \in l^{q), \theta} \) and \(\|x_n - x\|_{l^{q),\theta}} \to 0 \) as \( n \to \infty. \)\\
For $n \ge M, $ consider 
\[
\|x_n - x\|_{l^{q),\theta}}  = \sup_{\varepsilon > 0} {\varepsilon}^\frac{\theta}{q(1+\varepsilon)} \|x_n - x\|_{l^{q(1+\varepsilon)}
}.\] 
By the definition of supremum, for $\frac{\delta}{3}>0$  there exists \( \varepsilon_0 > 0, \) depending upon $n,$ such that:

\begin{equation}\label{10}
 \|x_n - x\|_{l^{q),\theta}}  -\frac{\delta}{3}\leq\varepsilon_{0}^\frac{\theta}{q(1+\varepsilon_{0})} \|x_n - x\|_{l^{q(1+\varepsilon_{0})}}.
\end{equation}
Since $\{x_m\}$ converges to $x$ in \( l^{q(1 + \varepsilon_0)}, \) for given \( \frac{\delta}{3}>0 \) there exists an integer \( M_1 \) such that

\begin{equation}\label{11}
\varepsilon_{0}^\frac{\theta}{q(1+\varepsilon_{0})} \|x_m - x\|_{l^{q(1+\varepsilon_{0})}}  < \frac{\delta}{3}, \quad \text{for all}~ m \geq M_1.
\end{equation}
Choose \( K > \max \{M, M_1\} \), then for $n \ge K,$ by using the triangle inequality in $l^{q),\theta}$, \eqref{10}, \eqref{9} and \eqref{11}, we get
\begin{align}\label{12}
\nonumber\|x_n - x\|_{l^{q),\theta}} & \leq \|x_n - x_K\|_{l^{q),\theta}}  + \|x_K - x\|_{l^{q),\theta}} \\\nonumber
& \le \|x_n - x_K\|_{l^{q),\theta}}+ \varepsilon_{0}^\frac{\theta}{q(1+\varepsilon_{0})} \|x_K - x\|_{l^{q(1+\varepsilon_{0})}}+\frac{ \delta}{3}\\
& < \delta, \quad \text{for all} ~n \geq K.
\end{align}
Thus, the Cauchy sequence $\{x_n\}_{ n \in X}$ converges to $x$ in \( l^{q), \theta}. \)\\
Now by using \eqref{12}, the fact that $\{x_n\}_{ n \in X} \subset l^{q),\theta},$   and   
$
\|x\|_{l^{q),\theta}} \leq \|x_n - x\|_{l^{q),\theta}}   + \|x_n\|_{l^{q),\theta}},
$ we have  $\|x\|_{l^{q),\theta}} < \infty.$ Therefore, $ x \in l^{q), \theta}.$ Hence proved.
\end{proof}
 So, finally we prove the completeness of the grand amalgam Lebesgue function space.

\begin{theorem}\label{th2}
Let $\theta>0$ and  $1 \leq p,~q < \infty$ then the grand amalgam Lebesgue function space $l^{q),\theta}(L^p)$ is a Banach space.
\end{theorem}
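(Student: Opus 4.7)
The strategy follows the pattern of Theorem \ref{th1}, combined with the completeness of $L^p$, and crucially uses the identity \eqref{km}: $\|g\|_{p,q),\theta}=\big\|\|g\chi_{I_k}\|_{L^p}\big\|_{l^{q),\theta}}$. Let $\{g_n\}$ be a Cauchy sequence in $l^{q),\theta}(L^p)$. The first step is to extract, for each fixed index $k\in X$, an $L^p$ limit of the truncations $g_n\chi_{I_k}$. To this end, observe that for any scalar sequence $y=\{y_k\}$ in $l^{q),\theta}$ the definition gives $\varepsilon^{\theta/(q(1+\varepsilon))}|y_k|\le \|y\|_{l^{q),\theta}}$ for every $\varepsilon>0$; specialising to $\varepsilon=1$ yields $|y_k|\le \|y\|_{l^{q),\theta}}$. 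Applying this to $y=\{\|(g_n-g_m)\chi_{I_k}\|_{L^p}\}_{k\in X}$ and using \eqref{km} gives the coordinate bound $\|(g_n-g_m)\chi_{I_k}\|_{L^p}\le \|g_n-g_m\|_{p,q),\theta}$. Hence for each $k$ the sequence $\{g_n\chi_{I_k}\}$ is Cauchy in $L^p$ and, by completeness of $L^p$, converges to some $h_k\in L^p$ vanishing off $I_k$. Since the $I_k$ are pairwise disjoint, the function $g$ defined by $g\chi_{I_k}=h_k$ for each $k\in X$ is a well-defined measurable candidate limit.

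The remaining step is to show $g\in l^{q),\theta}(L^p)$ and $\|g_n-g\|_{p,q),\theta}\to 0$. Fix $\delta>0$ and pick $N$ with $\|g_n-g_m\|_{p,q),\theta}<\delta$ for all $n,m\ge N$. For any finite truncation $K\subset X$, any $\varepsilon>0$, and any $n\ge N$, continuity of the $L^p$-norm under $g_m\chi_{I_k}\to h_k$ allows the limit to pass inside the finite sum, whence
\begin{align*}
\varepsilon^{\theta}\sum_{k\in K}\|(g_n-g)\chi_{I_k}\|_{L^p}^{q(1+\varepsilon)}
&=\lim_{m\to\infty}\varepsilon^{\theta}\sum_{k\in K}\|(g_n-g_m)\chi_{I_k}\|_{L^p}^{q(1+\varepsilon)}\\
&\le \liminf_{m\to\infty}\varepsilon^{\theta}\|g_n-g_m\|_{l^{q(1+\varepsilon)}(L^p)}^{q(1+\varepsilon)}
\le \delta^{q(1+\varepsilon)},
\end{align*}
where the last inequality uses $\varepsilon^{\theta/(q(1+\varepsilon))}\|h\|_{l^{q(1+\varepsilon)}(L^p)}\le \|h\|_{p,q),\theta}$, which is immediate from \eqref{eq10}. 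Letting $K\uparrow X$ by monotone convergence, extracting a $q(1+\varepsilon)$-th root, and then taking the supremum over $\varepsilon>0$ yields $\|g_n-g\|_{p,q),\theta}\le \delta$ for every $n\ge N$. In particular $g_N-g$ has finite grand amalgam norm, so combined with $g_N\in l^{q),\theta}(L^p)$ and the triangle inequality proved in Theorem \ref{thm 1}, this forces $g\in l^{q),\theta}(L^p)$; and $\|g_n-g\|_{p,q),\theta}\to 0$ is exactly the conclusion we need.

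The main obstacle is the last step: unlike in Theorem \ref{th1}, where the limit is produced inside a single $l^{q(1+\varepsilon)}$ space and then transferred to $l^{q),\theta}$, here the candidate $g$ is built by $L^p$-limits on each piece $I_k$, and one must upgrade this piecewise convergence to convergence in a norm whose defining supremum ranges over a continuum of exponents $\varepsilon>0$. The device of truncating the sum over $k$ to a finite set $K$, invoking continuity of the $L^p$-norm on each finite sum, using Fatou's inequality to bring in the Cauchy estimate in the $\|\cdot\|_{p,q),\theta}$ norm, and only afterwards letting $K\uparrow X$ and taking the supremum over $\varepsilon$, is what produces the single bound $\delta$ uniformly in $\varepsilon$.
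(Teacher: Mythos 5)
Your proof is correct, but it takes a genuinely different route from the paper. The paper deduces completeness by verifying the Riesz--Fischer property: given $\sum_k\|g_k\|_{p,q),\theta}<\infty$, it passes the norm of the sum through the identity \eqref{km} and invokes the already-established completeness of $l^{q),\theta}$ (Theorem \ref{th1}) and of $L^p$ on each cell, so the argument is short and modular (indeed the paper notes it also follows from the general amalgamation theorem of Holland). You instead run a direct Cauchy-sequence argument: the coordinate bound $\|(g_n-g_m)\chi_{I_k}\|_{L^p}\le\|g_n-g_m\|_{p,q),\theta}$ (via $\varepsilon=1$ in the supremum) produces cell-wise $L^p$ limits $h_k$, and the finite-truncation/Fatou device upgrades this piecewise convergence to a bound $\|g_n-g\|_{p,q),\theta}\le\delta$ that is uniform over the continuum of exponents $\varepsilon>0$. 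What your approach buys is a fully self-contained and explicit construction of the limit, with every convergence verified by hand; what the paper's approach buys is brevity and reuse of Theorem \ref{th1}, at the cost of leaving the convergence of the partial sums somewhat implicit in the appeal to the Riesz--Fischer property of the component spaces. Both arguments are valid; yours is arguably the more careful of the two.
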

\begin{proof}
To prove that $l^{q), \theta}(L^p)$  is a Banach space, we shall prove the  Riesz-Fischer property i.e.,  if \(\{g_k\}_{ k \in X}\) is a sequence of functions in  $l^{q), \theta}(L^p)$  with
$\displaystyle\sum_{ k \in X} \|g_k\|_{p,q),\theta} < \infty$ then $ \displaystyle\sum_{ k \in X} g_k $converges in $l^{q),\theta}(L^p).$
By using \eqref{km}, we can write 
\be
\nonumber \left\| \sum_{ k \in X} g_k \right\|_{p,q),\theta}=\sup_{\varepsilon > 0}~ \varepsilon^{\frac{\theta}{q(1+\varepsilon)}} \left\|\left\| \lr\sum_{ k \in X} g_k\rr \chi_{I_n} \right\|_{L^p}\right\|_{l^{q(1+\varepsilon)}}=\left\| \left\| \sum_{ k \in X} g_k \chi_{I_n} \right\|_{L^p} \right\|_{l^{q),\theta}}.
\ee
Now, by using Theorem \ref{th1} and the fact that for  $p \ge 1,$  \( L^p \) space is a Banach space, we can use the Riesz-Fischer property in both the spaces. Consequently, we obtain
\begin{align*}
\left\| \sum_{ k \in X} g_k \right\|_{p,q),\theta}&=\left\| \left\| \sum_{ k \in X} g_k \chi_{I_n} \right\|_{L^p} \right\|_{l^{q),\theta}}\\
& \le \left\| \sum_{ k \in X}\| g_k\chi_{I_n} \|_{L^p} \right\|_{l^{q),\theta}}\\
& \le \sum_{ k \in X}\big\| \| g_k \chi_{I_n} \|_{L^p} \big\|_{l^{q),\theta}}=\sum_{ k \in X}\| g_k \|_{p,q),\theta} < \infty. 
\end{align*}
Hence by the Riesz-Fischer property, we have proved that $l^{q),\theta}(L^p)$ is a Banach space. 
\end{proof}

\begin{remark}
By the completeness of the  classical Lebesgue space $L^p$ on $I_k, ~~ k \in X,$  together with  Theorem \ref{th1}, which gives the completeness of grand Lebesgue sequence space $l^{q),\theta}$, Theorem \ref{th2} can also be derived by applying Theorem 1 of \cite{ho}.
\end{remark}

\begin{remark}\label{r1}
In the definition \eqref{eq10} of the grand amalgam Lebesgue function space $l^{q),\theta}(L^p),$ the scaling function$\psi(\varepsilon)=\varepsilon^{\frac{1}{(1 + \varepsilon)}},$ for $\varepsilon >0, $  takes its maximum at  $\varepsilon=\frac{1}{W(\frac{1}{\mathrm e})} \approx 3.59$ with maximum value $\frac{1}{\mathrm{e} W(1 / \mathrm{e})} \approx  1.32.,$ where $W$ is the \emph{Lambert function} which is the inverse of the function $\phi: \mathbb R^+ \rightarrow \mathbb R^+,$ defined as $\phi(y)= y\mathrm e^y,$ see \cite{cghjk}.
\end{remark}

 Below we present the equivalent norm for the space $l^{q),\theta}(L^p),$ similar to \eqref{kmm}, in which the supremum is taken over the finite interval $0< \varepsilon \le \varepsilon_0,$ for some $\varepsilon>0.$ The  equivalence can be deduced from the proof of the Lemma 3.5, \cite{rsu}. For clarity we are giving proof here.
  
\begin{lemma}
The norm $\|g\|_{p,q),\theta}$ is equivalent to the norm
\be 
\nonumber \|g\|_{p,q),\theta,\varepsilon_0} := \sup_{0 < \varepsilon \le \varepsilon_0} \varepsilon^{\frac{\theta}{q(1 + \varepsilon)}} \|g\|_{p,q (1 + \varepsilon)} , \quad \varepsilon_0 \in (0, \infty).
\ee  
Moreover,
\be\label{115}  
\|g\|_{p,q),\theta,\varepsilon_0} \leq \|g\|_{p,q),\theta} \leq (c(\varepsilon_0))^{\frac{\theta}{q}} \|g\|_{p,q),\theta,\varepsilon_0},
\ee
 where $c\left(\varepsilon_0\right)=\frac{1}{\mathrm{e} W(1 / \mathrm{e})} \varepsilon_0^{-\frac{1}{1+\varepsilon_0}}.$ 
\end{lemma}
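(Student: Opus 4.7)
The first inequality in \eqref{115} is immediate: since $\|g\|_{p,q),\theta,\varepsilon_0}$ is the supremum of the same quantity over a strictly smaller set, we trivially have $\|g\|_{p,q),\theta,\varepsilon_0} \leq \|g\|_{p,q),\theta}$. The real content is the reverse direction, and my plan is to split the supremum defining $\|g\|_{p,q),\theta}$ at $\varepsilon_0$ and handle the two resulting pieces separately.

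For $0 < \varepsilon \le \varepsilon_0$, the corresponding summand of the supremum is bounded by $\|g\|_{p,q),\theta,\varepsilon_0}$ by definition. Since $c(\varepsilon_0) \ge 1$ (as $c(\varepsilon_0)$ is the ratio of the global maximum of $\psi(\varepsilon) = \varepsilon^{1/(1+\varepsilon)}$ to its value at $\varepsilon_0$, cf.\ Remark \ref{r1}), this already yields the bound $(c(\varepsilon_0))^{\theta/q} \|g\|_{p,q),\theta,\varepsilon_0}$ on this range.

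For $\varepsilon > \varepsilon_0$, the key monotonicity input is that for sequences (with counting measure) $\|\cdot\|_{l^{s}}$ is non-increasing in $s$, so applying this to the sequence $\{\|g\chi_{I_k}\|_{L^p}\}_{k \in X}$ gives
\begin{equation*}
\|g\|_{p,q(1+\varepsilon)} \le \|g\|_{p,q(1+\varepsilon_0)}.
\end{equation*}
Next, the definition of $\|g\|_{p,q),\theta,\varepsilon_0}$ evaluated at the endpoint $\varepsilon_0$ yields $\|g\|_{p,q(1+\varepsilon_0)} \le \varepsilon_0^{-\theta/(q(1+\varepsilon_0))}\|g\|_{p,q),\theta,\varepsilon_0}$. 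Combining these,
\begin{equation*}
\varepsilon^{\frac{\theta}{q(1+\varepsilon)}}\|g\|_{p,q(1+\varepsilon)} \le \bigl(\varepsilon^{1/(1+\varepsilon)}\bigr)^{\theta/q}\, \varepsilon_0^{-\theta/(q(1+\varepsilon_0))}\,\|g\|_{p,q),\theta,\varepsilon_0}.
\end{equation*}
Taking the supremum over $\varepsilon > \varepsilon_0$ and invoking the maximum of the Lambert-type scaling function from Remark \ref{r1}, namely $\sup_{\varepsilon>0}\varepsilon^{1/(1+\varepsilon)} = \frac{1}{\mathrm{e}W(1/\mathrm{e})}$, gives precisely $(c(\varepsilon_0))^{\theta/q}\|g\|_{p,q),\theta,\varepsilon_0}$.

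Combining the two ranges completes the proof. The only genuinely subtle step is the case $\varepsilon > \varepsilon_0$: it requires both the embedding of $l^{q(1+\varepsilon_0)}$ into $l^{q(1+\varepsilon)}$ (transferring control of the sequence norm from large $\varepsilon$ back to $\varepsilon_0$) and the sharp calculus fact that $\psi(\varepsilon) = \varepsilon^{1/(1+\varepsilon)}$ is bounded on $(0,\infty)$ with the explicit Lambert-function bound. Everything else reduces to algebraic manipulation of the definitions.
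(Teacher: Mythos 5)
Your proof is correct and follows essentially the same route as the paper: the first inequality is immediate from restricting the supremum, and for $\varepsilon>\varepsilon_0$ you use the embedding $l^{q(1+\varepsilon_0)}\subset l^{q(1+\varepsilon)}$ applied to the sequence $\{\|g\chi_{I_k}\|_{L^p}\}_{k}$, pull the value at $\varepsilon_0$ out via the definition of $\|g\|_{p,q),\theta,\varepsilon_0}$, and bound the resulting ratio of $\psi(\varepsilon)=\varepsilon^{1/(1+\varepsilon)}$ by its Lambert-function maximum, exactly as in the paper's split of the supremum into a maximum over the two ranges. No gaps.
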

\begin{proof} Take some $\varepsilon_0 >0.$ Clearly by the definition of the supremum, we have 
\be\label{eq13}
\|g\|_{p,q),\theta,\varepsilon_0} \leq \|g\|_{p,q),\theta}.
\ee
Now, for $\varepsilon_0 < \varepsilon < \infty, $ $l^{q(1+\varepsilon_0)} \subset l^{q(1+\varepsilon)},$ we have $\big\| \| g \chi_{I_k}\|_{L^p} \big\|_{l^{q(1+\varepsilon)}} \le \big\| \| g \chi_{I_k}\|_{L^p} \big\|_{l^{q(1+\varepsilon_0)}}.$\\
Consider,
\begin{align}\label{eq14}
\nonumber\|g\|_{p,q),\theta}=& \sup_{\varepsilon > 0}~ \varepsilon^{\frac{\theta}{q(1+\varepsilon)}} \big\| \| g \chi_{I_k}\|_{L^p} \big\|_{l^{q(1+\varepsilon)}}\\\nonumber
=& \max\lge{\sup_{0<\varepsilon\le\varepsilon_0}~ \varepsilon^{\frac{\theta}{q(1+\varepsilon)}} \big\| \| g \chi_{I_k}\|_{L^p} \big\|_{l^{q(1+\varepsilon)}}}, \sup_{\varepsilon_0 <\varepsilon<\infty}~ \varepsilon^{\frac{\theta}{q(1+\varepsilon)}} \big\| \| g \chi_{I_k}\|_{L^p} \big\|_{l^{q(1+\varepsilon)}}\rge\\\nonumber
\le & \max\lge{\sup_{0<\varepsilon\le\varepsilon_0}~ \varepsilon^{\frac{\theta}{q(1+\varepsilon)}} \big\| \| g \chi_{I_k}\|_{L^p} \big\|_{l^{q(1+\varepsilon)}}}, \sup_{\varepsilon_0 <\varepsilon<\infty}~ \varepsilon^{\frac{\theta}{q(1+\varepsilon)}} \big\| \| g \chi_{I_k}\|_{L^p} \big\|_{l^{q(1+\varepsilon_0)}}\rge\\\nonumber
=& \|g\|_{p,q),\theta,\varepsilon_0}~ \max\lge 1, \sup_{\varepsilon_0 <\varepsilon<\infty} \lr\frac{\varepsilon^{\frac{1}{1+\varepsilon}}}{\varepsilon_0^{\frac{1}{1+\varepsilon_0)}}}\rr^{\frac{\theta}{q}}\rge\\
\le&  \|g\|_{p,q),\theta,\varepsilon_0}~c(\varepsilon_0)^{\frac{\theta}{q}},
\end{align}
where $c\left(\varepsilon_0\right)=\frac{1}{\mathrm{e} W(1 / \mathrm{e})} \varepsilon_0^{-\frac{1}{1+\varepsilon_0}}\ge 1$ and $\frac{1}{\mathrm{e} W(1 / \mathrm{e})} \approx  1.32.$
Hence, by \eqref{eq13} and \eqref{eq14}, we obtain the required equivalence \eqref{115}. 
\end{proof}

 In the next theorem we prove the following embeddings: 

\begin{theorem}\label{th4}
The following embeddings hold true:
\begin{enumerate}[(a)]
\item  If $1 \leq q_1 < q_2 < \infty$ and $1 \leq p < \infty,$ then 
 $l^{q_1), \frac{q_1}{q_2}\theta}(L^p) \hookrightarrow l^{q_2), \theta}(L^p).$

\item If $0 <\theta_1 \leq \theta_2$, 
then $ l^{q),\theta_1}(L^p) \hookrightarrow l^{q),\theta_2}(L^p).$

\item If $0<\theta_1 \leq \theta_2,$  $1 \leq p_2 < p_1 < \infty$ and $1 \leq q < \infty,$ then
$ l^{q),\theta_1}(L^{p_1}) \hookrightarrow l^{q),\theta_2}(L^{p_2}). $

\item For $1 \leq p < \infty,$ $1 \leq q < \infty$ and $\theta > 0$ then $l^q\left(L^p\right) \hookrightarrow l^{q), \theta}\left(L^p\right).$ I.e., classical amalgam space is contained in the grand amalgam Lebesgue function space.

\item If $\delta > 0$ and $0 < \sigma < \frac{1}{q'},$ where $q'$ is the conjugate of $q,$ then
$$ l^{q(1-\sigma)}(L^p) \hookrightarrow l^{q),\theta}(L^p) \hookrightarrow l^{q(1+\delta)}(L^p). $$
\end{enumerate}
\end{theorem}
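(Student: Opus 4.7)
The plan hinges on the identity
$$\|g\|_{p,q),\theta} = \bigl\| \|g\chi_{I_k}\|_{L^p} \bigr\|_{l^{q),\theta}}$$
from \eqref{km}, which reduces every embedding in the statement to a combination of (i) an embedding among the grand sequence spaces $l^{q),\theta}$ applied to the outer scale of local $L^p$-norms, and (ii) a pointwise (per index $k$) comparison of the local $L^p$-norms themselves. Parts (a), (b), (d), (e) are purely outer-scale statements, while (c) combines both mechanisms.

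For (a), after applying \eqref{km} on both sides the claim reduces to $l^{q_1),(q_1/q_2)\theta} \hookrightarrow l^{q_2),\theta}$ as sequence spaces. The key algebraic observation is that $\tfrac{\theta}{q_2(1+\varepsilon)}=\tfrac{(q_1/q_2)\theta}{q_1(1+\varepsilon)}$, so the scaling weights $\varepsilon^{\theta/[\,\cdot\,]}$ defining the two suprema coincide at every $\varepsilon>0$. Combined with the elementary fact that $l^{r_1}\hookrightarrow l^{r_2}$ whenever $r_1\leq r_2$ on a discrete index set, giving $\|a\|_{l^{q_2(1+\varepsilon)}}\leq\|a\|_{l^{q_1(1+\varepsilon)}}$ for each $\varepsilon$, one obtains the desired inequality with constant $1$. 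Parts (d) and (e) follow by the same reduction together with the sequence-space embeddings listed in \eqref{1.2}, which the introduction already records as known.

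For (b), a direct $\varepsilon$-by-$\varepsilon$ comparison of $\varepsilon^{\theta_2/[q(1+\varepsilon)]}$ and $\varepsilon^{\theta_1/[q(1+\varepsilon)]}$ reverses direction as $\varepsilon$ crosses $1$, so I will instead work with the equivalent norm $\|\cdot\|_{p,q),\theta,\varepsilon_0}$ of the preceding lemma, choosing $\varepsilon_0\leq 1$. On $(0,\varepsilon_0]$ one has $\varepsilon\leq 1$ together with $\theta_1\leq\theta_2$, hence $\varepsilon^{\theta_2/[q(1+\varepsilon)]}\leq\varepsilon^{\theta_1/[q(1+\varepsilon)]}$; taking the supremum yields $\|g\|_{p,q),\theta_2,\varepsilon_0}\leq\|g\|_{p,q),\theta_1,\varepsilon_0}$, and the two-sided equivalence \eqref{115} transfers the inequality back to the original norms.

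Finally, (c) is obtained by chaining (b) with a pointwise comparison of local $L^p$-norms: since each $I_k$ has unit Lebesgue measure and $p_2<p_1$, H\"older's inequality gives $\|g\chi_{I_k}\|_{L^{p_2}}\leq\|g\chi_{I_k}\|_{L^{p_1}}$ for every $k$; inserting this pointwise bound into the monotone outer norm $\|\cdot\|_{l^{q),\theta_2}}$ via \eqref{km} and then applying (b) closes the chain $\|g\|_{p_2,q),\theta_2}\leq\|g\|_{p_1,q),\theta_2}\leq c\,\|g\|_{p_1,q),\theta_1}$. The main technical care is in (b), where one must restrict the supremum range to $(0,\varepsilon_0]$ before comparing the two weights; the remaining parts are essentially bookkeeping around \eqref{km} and the sequence-space embeddings already available from the introduction.
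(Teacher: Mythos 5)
Your proposal is correct and follows essentially the same route as the paper: reduce each part via the identity \eqref{km} to embeddings of the grand sequence spaces applied to the sequence of local $L^p$-norms, using the unit measure of the blocks $I_k$ for the inner comparison in (c). The only genuine deviation is in (b), where the paper simply cites the known sequence-space embedding from \eqref{1.2} while you re-derive it through the equivalent norm $\|\cdot\|_{p,q),\theta,\varepsilon_0}$ with $\varepsilon_0\leq 1$ (a valid, more self-contained argument); note also that the paper's proof additionally exhibits examples showing each inclusion is strict, which the statement as worded does not demand.
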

\begin{proof}
\begin{enumerate}[(a)]
\item Let $g \in  l^{q_1), \frac{q_1}{q_2}\theta}(L^p)$. Since $q_1 < q_2$, for every $\varepsilon>0$ we have $l^{q_1(1+\varepsilon)} \subset l^{q_2(1+\varepsilon)}.$ I.e.,
$
\big\| \| g \chi_{I_k}\|_{L^p} \big\|_{l^{q_2(1+\varepsilon)}} \leq \big\|\| g \chi_{I_k}\|_{L^p} \big\|_{l^{q_1(1+\varepsilon)}}
.$ By using this fact, we get
\begin{align*}
\|g\|_{p,q_2),\theta}=&\sup_{\varepsilon > 0} \varepsilon^{\frac{\theta}{{q_2}(1+\varepsilon)}} \big\|\| g \chi_{I_k}\|_{L^p} \big\|_{l^{q_2(1+\varepsilon)}}
\le \sup_{\varepsilon > 0} \varepsilon^{\frac{\theta}{{q_2}(1+\varepsilon)}} \big\|\| g \chi_{I_k}\|_{L^p} \big\|_{l^{q_1(1+\varepsilon)}}\\
=&\sup_{\varepsilon > 0} \varepsilon^{\frac{\frac{q_1}{q_2}\theta}{q_1(1+\varepsilon)}} \big\|\| g \chi_{I_k}\|_{L^p} \big\|_{l^{q_1(1+\varepsilon)}}=
\|g\|_{p,q_1),\frac{q_1}{q_2}\theta}<\infty.
\end{align*}
Thus 
$
g \in l^{q_2),\theta}(L^p).
$\\
But this is the strict inclusion. Take $g(x) = \displaystyle\sum_{n \in \mathbb{N}} n^{-\frac{1}{q_2}} \chi_{I_n}(x).$ For $\theta \ge1,$ by Example 1, $
g \in l^{q_2),\theta}(L^p).$
Now, consider, 
\begin{align*}
\|g\|_{p,q_1),\frac{q_1}{q_2}\theta} =&\sup_{\varepsilon > 0}\varepsilon^\frac{{\frac{q_1}{q_2}\theta}}{{q_1(1+\varepsilon)}}    \lr \sum_{n \in \mathbb{N}} \left(\int_{n}^{n+1} |g(x)|^{p} \, dx
\right)^ \frac{q_1(1+\varepsilon)}{p}\rr^\frac{1}{{q_1(1+\varepsilon)}}\\
=&\sup_{\varepsilon > 0} \varepsilon^\frac{{\frac{q_1}{q_2}\theta}}{{q_1(1+\varepsilon)}}   \lr \sum_{n \in \mathbb{N}} \ n^{-\frac{q_1(1+\varepsilon)}{q_2}} \rr^\frac{1}{{q_1(1+\varepsilon)}}\\
=& \sup_{\varepsilon > 0} \varepsilon^\frac{{\frac{q_1}{q_2}\theta}}{{q_1(1+\varepsilon)}} \left\| n^{-\frac{1}{q_2}} \right\|_{l^{q_1(1+\varepsilon)}}.
\end{align*}
For $0 < \varepsilon \leq \alpha$, where $\alpha=\frac{q_2}{q_1} - 1,$ the series $x=\lge n^{-\frac{1}{q_2}}\rge_{n \in \mathbb N}$ diverges in the space $l^{q_1(1+\varepsilon)}$ because $\frac{q_1(1+\varepsilon)}{q_2} \le 1.$ 
 Therefore, $\|g\|_{p,q_1),\frac{q_1}{q_2}\theta} = \infty.$ Thus $g \notin l^{q_1),\frac{q_1}{q_2}\theta}(L^p)$.

\item Let $0<\theta_1 \leq \theta_2$, we need to prove 
$
l^{q),\theta_1}(L^p) \hookrightarrow l^{q),\theta_2}(L^p).
$
Let $g \in l^{q),\theta_1}(L^p)$, by using embeddings  \eqref{1.2} in grand Lebesgue sequence spaces, we get
\begin{align*}
\|g\|_{p,q),\theta_2}=&\big\|\| g \chi_{I_k}\|_{L^p}\big\|_{l^{q), \theta_2}}
\le c \big\|\| g \chi_{I_k}\|_{L^p}\big\|_{l^{q), \theta_1}}
=c\|g\|_{p,q),\theta_1}<\infty,
\end{align*}
where $c$ is a positive constant. Hence $g \in l^{q),\theta_2}(L^p)$.\\
Now, we show that the inclusion is strict.
Take $\theta_1=\frac{1}{2}$ and $\theta_2=2.$  Let $g(x) = \sum_{n \in \mathbb{N}} n^{-\frac{1}{q}} \chi_{I_n}(x).$
 By Example 1, 
$g \in l^{q), 2}\left(L^p\right) 
$ but
 $g \notin l^{q), \frac{1}{2}}\left(L^p\right)$ because$\theta_2=2>1$ and $\frac{1}{2}=\theta_1<1$.

\item Let $1 \leq p_2 < p_1 < \infty,$  $0<\theta_1 \leq \theta_2$ and $1 \leq q < \infty.$
Let $g \in l^{q),\theta_1}(L^{p_1}).$
Since $p_2 < p_1$ and Lebesgue measure of $I_k$ is 1, for each  $ k \in X,$ by the nesting property of the $L^p$ spaces we have
$ \| g \chi_{I_k}\|_{L^{p_2}} \leq \| g \chi_{I_k}\|_{L^{p_1}}.$ Also, for $x=\lge x_k\rge_{ k \in X}, ~ y=\lge y_k\rge_{ k \in X} \in l^{q(1+\varepsilon)}$ with $x \le y,$ i.e., $x_k \le y_k,$ for all $k \in X,$  by the lattice property of the Lebesgue sequence spaces we have $\|x\|_{l^{q(1+\varepsilon)}} \leq \|y\|_{l^{q(1+\varepsilon)}},$ for every $\varepsilon >0.$
By using these arguments and  embeddings  \eqref{1.2} in grand Lebesgue sequence spaces, we obtain,
\begin{align*}
\|g\|_{p_2,q),\theta_2} &=\sup_{\varepsilon > 0} \varepsilon^{\frac{\theta_2}{{q}(1+\varepsilon)}} \big\|\| g \chi_{I_k}\|_{L^{p_2}} \big\|_{l^{q(1+\varepsilon)}}\\
&=\big\|\| g \chi_{I_k}\|_{L^{p_2}}\big\|_{l^{q), \theta_2}}\\
&\le c \,\big\|\| g \chi_{I_k}\|_{L^{p_2}}\big\|_{l^{q), \theta_1}}
\\
&= c\,\sup_{\varepsilon > 0} \varepsilon^{\frac{\theta_1}{{q}(1+\varepsilon)}} \big\|\| g \chi_{I_k}\|_{L^{p_2}} \big\|_{l^{q(1+\varepsilon)}}\\
&\le c\,\sup_{\varepsilon > 0} \varepsilon^{\frac{\theta_1}{{q}(1+\varepsilon)}} \big\|\| g \chi_{I_k}\|_{L^{p_1}} \big\|_{l^{q(1+\varepsilon)}}<\infty,
\end{align*}
where $c$ is a positive constant. Hence
$
g \in l^{q),\theta_2}(L^{p_2}).$\\
Again this inclusion is strict. Let $g(x) = \sum_{n \in \mathbb{N}} n^{\frac{-1}{q}} \chi_{[n,n+1)}(x).$ Take $\alpha = \frac{p_2}{q} + p_1,$ $\theta_2 = (\alpha - p_2)\frac{q}{p_2}$ and  $\beta=p_1 - p_2  > 0.$ Since $\beta >0,$ we have $\theta_2>1.$ Therefore, by Example 1
$
 g \in l^{q),\theta_2}(L^{p_2}).
$
Now take $\theta_1 = (\alpha - p_1)\frac{q}{p_1}.$ Clearly $\theta_1 < \theta _2.$ Since  $\theta_1 = (\alpha - p_1)\frac{q}{p_1}=\frac{p_2}{p_1} < 1,$ by Example 1, $ g \notin l^{q),\theta_1}(L^{p_1}).$

\item Let $1 \leq p < \infty$, $1 \leq q < \infty$ and $\theta > 0.$
Let $g \in l^q\left(L^p\right),$ in view of the Remark \ref{r1} and by applying the nested property of the classical Lebesgue sequence spaces we get

\begin{align*}
\|g\|_{p, q), \theta}&=\sup_{\varepsilon > 0} {\varepsilon^\frac{\theta}{q(1+\varepsilon)}} \big\| \| g \chi_{I_k}\|_{L^p} \big\|_{l^{q(1+\varepsilon)}}\\ 
&\le \sup_{\varepsilon > 0} {\varepsilon^\frac{\theta}{q(1+\varepsilon)}} \big\| \| g \chi_{I_k}\|_{L^p}\big\|_{l^{q}} \\
&=
  \| g \|_{l^q(L^p)} \sup_{\varepsilon > 0} {\varepsilon^\frac{\theta}{q(1+\varepsilon)}} \\
	&=  \| g \|_{l^q(L^p)} \lr\frac{1}{\mathrm e{W(\frac{1}{\mathrm e})}}\rr^{\frac{\theta}{q}}<\infty.
\end{align*}
Therefore $ g \in  l^{q),\theta}(L^p).$\\
This inclusion is also strict because for $\theta \geq 1,$
$g(x) = \displaystyle\sum_{n \in \mathbb{N}} n^{\frac{-1}{q}} \chi_{[n,n+1)}(x)
 \in l^{q),\theta}(L^p)$  but $g \notin l^q(L^p)$ because
\be
\nonumber \|g\|_{l^q(L^p)}=\big\| \| g \chi_{I_n}\|_{L^p} \big\|_{l^q}=\lr
\sum_{ n \in \mathbb N} \lr\int_n^{n+1} \lr n^{\frac{-1}{q}} \chi_{I_n}\rr^p\rr^{\frac{q}{p}}\rr^\frac{1}{q}=\lr\sum_{n \in \mathbb N}\frac1
{n}\rr^\frac{1}{q}=\infty.
\ee

\item Let $\delta > 0.$
By applying embedding \eqref{1.2} we get
\begin{align}\label{eq16}
\|g\|_{l^{q(1+\delta)}(L^p)}&= \big\| \| g \chi_{I_k}\|_{L^p} \big\|_{l^{q(1+\delta)}} 
\le c\sup_{\varepsilon > 0} {\varepsilon^\frac{\theta}{q(1+\varepsilon)}} \big\| \| g \chi_{I_k}\|_{L^p}\big\|_{l^{q(1+\varepsilon)}}=c \|g\|_{p,q), \theta}<\infty,
\end{align}
where $c$ is a positive constant. \\
Now let $0< \sigma < \frac{1}{q'}.$ Again by applying embedding \eqref{1.2} we obtain
\begin{align}\label{eq17}
\|g\|_{p,q), \theta}
=\big\|\| g \chi_{I_k}\|_{L^p}\big\|_{l^{q),\theta}}
\le C \big\| \| g \chi_{I_k}\|_{L^p}\big\|_{l^{q(1-\sigma)}}
\le C \|g\|_{l^{q(1-\sigma)}(L^p)}<\infty,   
\end{align}
where $C$ is a positive constant. 
After combining \eqref{eq16} and \eqref{eq17}, we obtain
\[\|g\|_{l^{q(1+\delta)}(L^p)} \le c \|g\|_{p,q), \theta}\le c \,C \|g\|_{l^{q(1-\sigma)}(L^p)}.\]
Hence done.
\end{enumerate}
\end{proof}

\begin{remark}
Suppose $\theta>0$ is given. When $1\le p = q$, we know that the classical amalgam Lebesgue space coincides with Lebesgue  space, see \cite{ho}, i.e.,
$l^q(L^p) = L^p.$ By using  Theorem \ref{th4} (d) part  we get the following embedding:
\[ l^p(L^p)=L^p \hookrightarrow l^{p),\theta}(L^p). \]
I.e., $L^p$ space is strictly contained in grand amalgam Lebesgue function space $l^{p),\theta}(L^p).$
\end{remark}

We now prove a lemma which is required for one of the main theorems in Section 4.

\begin{lemma}\label{lm 1}
Let  $\{ f_n \}_{n \in \mathbb{N}}$ be a sequence in $l^{q), \theta}(L^p)$ and $\{ f_n  \}_{n \in \mathbb{N}} \to f $ in the norm of \(l^{q), \theta}(L^p)\), where \(f \in l^{q), \theta}(L^p)\). Then $\{ f_n \}_{n \in \mathbb{N}}$ has a subsequence which converges pointwise almost everywhere to f.

\begin{proof}
Suppose $\|f_n-f\|_{p,q),\theta} \to 0.$
Let $\eta>0$ be given then there exists a positive integer $K$ such that
\be
\nonumber
\|f_n-f\|_{p,q),\theta}=\sup_{\varepsilon > 0}~ \varepsilon^{\frac{\theta}{q(1+\varepsilon)}} \| f_n-f \|_{l^{q(1+\varepsilon)}({L^p})} <{\eta}, \,\,\text{for all}\,\, n \ge K.
\ee
This implies
\be
\nonumber \| f_n-f \|_{l^{q(1+\varepsilon)}({L^p})} <{\eta}, \,\,
\text{for all} \,\, n \ge K, \, \,\varepsilon>0.
\ee
So, for $0<\varepsilon \leq \varepsilon_0$,
\be
\nonumber\| f_n-f \|_{l^{q(1+\varepsilon_0)}({L^p})} \leq\| f_n-f \|_{l^{q(1+\varepsilon)}({L^p})} <{\eta}, ~~ \,\,
\text{for all} \,\, n \ge K.
\ee
Therefore,
 $\{ f_n  \}_{n \in \mathbb{N}} \to f$  in the norm of \(l^{q(1+\varepsilon_0)}(L^p)\). So, by using the Lemma 2.12 \cite{ay},
 $\{ f_n \}_{n \in \mathbb{N}}$ has a subsequence which converges pointwise almost everywhere to f.
\end{proof}
\end{lemma}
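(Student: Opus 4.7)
The plan is to reduce the statement to the analogous fact for classical amalgam spaces, where the standard subsequence extraction via norm convergence is known (e.g., Lemma 2.12 of \cite{ay}). The bridge between the grand amalgam setting and the classical amalgam setting will be the elementary observation that the grand norm controls any single classical amalgam norm $\|\cdot\|_{l^{q(1+\varepsilon_0)}(L^p)}$ up to a fixed positive constant depending on $\varepsilon_0$.

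First, I would fix some $\varepsilon_0 > 0$ once and for all. Using the definition of the supremum in $\|f_n - f\|_{p,q),\theta}$, I would extract the one-sided bound
\[
\varepsilon_0^{\theta/(q(1+\varepsilon_0))}\,\|f_n - f\|_{l^{q(1+\varepsilon_0)}(L^p)} \;\le\; \|f_n - f\|_{p,q),\theta}.
\]
Since $\varepsilon_0^{\theta/(q(1+\varepsilon_0))}$ is a strictly positive constant independent of $n$, the hypothesis $\|f_n - f\|_{p,q),\theta} \to 0$ immediately yields $\|f_n - f\|_{l^{q(1+\varepsilon_0)}(L^p)} \to 0$; that is, $f_n \to f$ in the classical amalgam space $l^{q(1+\varepsilon_0)}(L^p)$.

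Second, with convergence now taking place in a classical amalgam space, I would directly invoke the subsequence extraction for classical amalgams (Lemma 2.12 of \cite{ay}) to obtain a subsequence $\{f_{n_k}\}$ converging to $f$ pointwise almost everywhere on $\bigcup_{k \in X} I_k$. This is the only nontrivial input; the rest is bookkeeping.

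The main point to watch is keeping the quantifiers straight: the hypothesis only provides control of the supremum over $\varepsilon > 0$, so a single classical amalgam norm must be isolated by fixing one value of $\varepsilon$. I do not expect a genuine obstacle here, because once the supremum is defeated by picking $\varepsilon_0$ arbitrary, the problem collapses into the classical amalgam setting where the subsequence argument is already well-established.
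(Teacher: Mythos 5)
Your proposal is correct and follows essentially the same route as the paper: both arguments defeat the supremum by fixing a single $\varepsilon_0>0$, deduce convergence in the classical amalgam space $l^{q(1+\varepsilon_0)}(L^p)$, and then invoke Lemma 2.12 of \cite{ay} to extract the a.e.\ convergent subsequence. If anything, your version is slightly cleaner, since you keep the constant $\varepsilon_0^{\theta/(q(1+\varepsilon_0))}$ explicit when passing from the grand norm to the fixed classical norm, whereas the paper drops this factor without comment.
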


Now we define a subset of $l^{q), \theta}(L^p),$ which form a closed subspace of the grand amalgam Lebesgue function space.
Let $1 \leq p, q < \infty$ and $\theta > 0$. We define  the \emph{Vanishing grand amalgam Lebesgue function space,} and denote it by 
$l^{\overset{\circ}q), \theta}(L^p),$
\[
l^{\overset{\circ}q), \theta}(L^p)= \left\{ g \in l^{q), \theta}(L^p) \;\middle|\; 
\displaystyle\lim_{\varepsilon \to 0} \varepsilon^{\theta} \sum_{n \in X} \left( \int_n^{n+1} |g(x)|^p \, dx \right)^{\frac{q(1+\varepsilon)}{p}} = 0 \right\}.
\]
\begin{theorem}
$l^{\overset{\circ}q), \theta}(L^p)$ is a closed subspace of $l^{q), \theta}(L^p)$.
\end{theorem}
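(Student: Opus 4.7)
The plan is first to verify that $l^{\overset{\circ}q), \theta}(L^p)$ is a linear subspace of $l^{q), \theta}(L^p)$, and then to show it is norm-closed. For closure under scalar multiplication, if $g$ lies in the vanishing space and $\lambda\in\mathbb{C}$, then
\[
\varepsilon^\theta \sum_{n \in X}\lr\int_n^{n+1}|\lambda g|^p\,dx\rr^{\frac{q(1+\varepsilon)}{p}}=|\lambda|^{q(1+\varepsilon)}\,\varepsilon^\theta \sum_{n \in X}\lr\int_n^{n+1}|g|^p\,dx\rr^{\frac{q(1+\varepsilon)}{p}},
\]
and $|\lambda|^{q(1+\varepsilon)}\to|\lambda|^q$ is bounded as $\varepsilon\to 0^+,$ so the product tends to $0$. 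For additivity I would combine Minkowski's inequality for $\|\cdot\|_{L^p}$ on each $I_n$ with the elementary convexity bound $(a+b)^r\le 2^{r-1}(a^r+b^r)$ applied with $r=q(1+\varepsilon)\ge 1$ to obtain
\[
\varepsilon^\theta\sum_{n \in X}\|(f+g)\chi_{I_n}\|_{L^p}^{q(1+\varepsilon)}\le 2^{q(1+\varepsilon)-1}\lek\varepsilon^\theta\sum_{n \in X}\|f\chi_{I_n}\|_{L^p}^{q(1+\varepsilon)}+\varepsilon^\theta\sum_{n \in X}\|g\chi_{I_n}\|_{L^p}^{q(1+\varepsilon)}\rek,
\]
where the bracketed quantities vanish as $\varepsilon\to 0^+$ while the prefactor $2^{q(1+\varepsilon)-1}\to 2^{q-1}$ stays bounded.

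For closedness, let $\{f_n\}_{n\in\mathbb{N}}\subset l^{\overset{\circ}q), \theta}(L^p)$ with $f_n\to f$ in $l^{q), \theta}(L^p)$. The key observation is that $\varepsilon^\theta\sum_n\|g\chi_{I_n}\|_{L^p}^{q(1+\varepsilon)}\to 0$ as $\varepsilon\to 0^+$ is equivalent to $\varepsilon^{\theta/(q(1+\varepsilon))}\|g\|_{l^{q(1+\varepsilon)}(L^p)}\to 0,$ since the exponent $q(1+\varepsilon)$ stays close to $q>0$. The virtue of the latter form is that it admits Minkowski's inequality in $l^{q(1+\varepsilon)}(L^p)$. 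Given $\eta>0$, choose $N$ so that $\|f-f_N\|_{p,q),\theta}<\eta$ and split
\[
\varepsilon^{\frac{\theta}{q(1+\varepsilon)}}\|f\|_{l^{q(1+\varepsilon)}(L^p)}\le \varepsilon^{\frac{\theta}{q(1+\varepsilon)}}\|f-f_N\|_{l^{q(1+\varepsilon)}(L^p)}+\varepsilon^{\frac{\theta}{q(1+\varepsilon)}}\|f_N\|_{l^{q(1+\varepsilon)}(L^p)}.
\]
The first summand is dominated by $\|f-f_N\|_{p,q),\theta}<\eta$ uniformly in $\varepsilon,$ while the second tends to $0$ as $\varepsilon\to 0^+$ because $f_N$ lies in the vanishing space. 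Sending $\varepsilon\to 0^+$ and then $\eta\to 0$ yields the vanishing property for $f$.

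The main obstacle is conceptual rather than technical: the defining quantity $\varepsilon^\theta\sum_n\|f\chi_{I_n}\|_{L^p}^{q(1+\varepsilon)}$ does not itself satisfy a triangle inequality because of the outer exponent $q(1+\varepsilon)$. The remedy is to pass to its $q(1+\varepsilon)$-th root, which is a genuine norm-like quantity on which Minkowski works, carry out the approximation argument there, and then translate the conclusion back to the defining expression using the continuity of $t\mapsto t^{q(1+\varepsilon)}$ at $t=0$ together with the boundedness of this exponent near $\varepsilon=0$.
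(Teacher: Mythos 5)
Your proof is correct and the closedness argument is essentially the paper's own: both pass to the quantity $\varepsilon^{\theta/(q(1+\varepsilon))}\|\cdot\|_{l^{q(1+\varepsilon)}(L^p)}$, split off an approximant from the vanishing space via the triangle inequality, bound the difference term uniformly in $\varepsilon$ by the grand norm, and let $\varepsilon\to 0^+$. The only difference is that you additionally verify that the vanishing class is a linear subspace (via homogeneity and the convexity bound $(a+b)^r\le 2^{r-1}(a^r+b^r)$), a step the paper omits but which is a legitimate part of the claim.
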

\begin{proof}
Let $g $ be a limit point of $l^{\overset{\circ}q), \theta}(L^p)$. This implies that
there exists a sequence
$\{ g_i \}_{i \in \mathbb{N}} \in l^{\overset{\circ}q), \theta}(L^p)$ such that $\{ g_i \}_{i \in \mathbb{N}} \to g$ in the norm of $l^{q), \theta}(L^p).$ 
I.e., for given $\eta>0$ there exists a positive integer $K$ such that
\be\label{301}
\|g_i-g\|_{l^{q), \theta}(L^p)}=\sup_{\varepsilon > 0} \left\{ {\varepsilon^{\theta}}  \sum_{n \in X} \left(\int_{n}^{n+1} |(g_i-g)(x)|^{p} \, dx \right)^ \frac{q(1+\varepsilon)}{p}\right\}^\frac{1}{{q(1+\varepsilon)}} <\frac{\eta}{2}, \,\,\text{for all}\,\, i \ge K.
\ee
Since $g_i \in l^{\overset{\circ}q), \theta}(L^p),$ for $i \ge K$ and $\frac{\eta}{2}>0,$ there exists $ \delta>0$  such that
\be\label{31}
 \varepsilon^\theta \sum_{n \in X} \left( \int_n^{n+1} |g_i(x)|^p \, dx \right)^{\frac{q(1+\varepsilon)}{p}}  < \left( \frac{\eta}{2} \right)^{q(1+\varepsilon)},
\quad \text{whenever } 0 < \varepsilon < \delta.
\ee
For $0 < \varepsilon < \delta,$ and $i \ge K,$  using \eqref{301}, \eqref{31} and triangle inequality, we obtain
\begin{align*}
\varepsilon^{\frac{\theta}{q(1+\varepsilon)}} 
\left( \sum_{n \in X} 
\left( \int_n^{n+1} |g(x)|^p \, dx \right)^{\frac{q(1+\varepsilon)}{p}} 
\right)^{\frac{1}{q(1+\varepsilon)}} &\leq
\sup_{0 < \varepsilon < \delta}
\left\{
\varepsilon^{\frac{\theta}{q(1+\varepsilon)}} 
\left( \sum_{n \in X} 
\left( \int_n^{n+1} |g(x)|^p \, dx \right)^{\frac{q(1+\varepsilon)}{p}} 
\right)^{\frac{1}{q(1+\varepsilon)}}
\right\}\\
&=\sup_{0 < \varepsilon < \delta} \varepsilon^{\frac{\theta}{q(1+\varepsilon)}} \| g\|_{l^{q(1+\varepsilon)}({L^p})}\\ 
&\le \sup_{0 < \varepsilon < \delta} \varepsilon^{\frac{\theta}{q(1+\varepsilon)}} \| g_i-g\|_{l^{q(1+\varepsilon)}({L^p})}+\sup_{0 < \varepsilon < \delta} \varepsilon^{\frac{\theta}{q(1+\varepsilon)}} \| g_i \|_{l^{q(1+\varepsilon)}({L^p})}\\
&\le \frac{\eta}{2}+ \frac{\eta}{2}=\eta.
\end{align*}
I.e., 
$$
\lim_{\varepsilon \to 0} \varepsilon^{\theta} \sum_{n \in X} \left( \int_n^{n+1} |g(x)|^p \, dx \right)^{\frac{q(1+\varepsilon)}{p}} = 0.
$$ This implies that $g \in  l^{\overset{\circ}q), \theta}(L^p).$  Hence $l^{\overset{\circ}q), \theta}(L^p)$ is a closed subspace of $l^{q), \theta}(L^p).$
\end{proof}

\section{H\"{o}lder's inequality in $l^{q), \theta}(L^p)$}
 In this section, we aim to prove the H\"{o}lder's inequality within the framework of the grand amalgam Lebesgue function spaces. To do so, we first prove H\"{o}lder's inequality  in the grand sequence space $l^{q),\theta}.$\\
 We begin by stating a lemma of independent interest.
\begin{lemma}
Let $\theta > 0$ is given and  $1 \leq p_i, q_i < \infty, ~~i = 1,2,3.$ Suppose that there exist positive constants $c$ and $C,$ respectively, such that
\be\label{eq18}
\|xy\|_{l^{q_3),\theta}} \leq c \|x\|_{l^{q_1),\theta}} \,\|y\|_{l^{q_2),\theta}}, \,\,\,\text{for}\,\,\, x \in l^{q_1),\theta},\, y \in l^{q_2),\theta},
\ee
where  $x=\lge x_k\rge_{k \in X}$ and $y=\lge y_k\rge_{k \in X},$
and
\be\label{eq19}
\|(hs)\chi_{E}\|_{L^{p_3}} \leq C\|h\chi_{E}\|_{L^{p_1}}\, \|s\chi_{E}\|_{L^{p_2}},\,\, \,h\in L^{p_1},\,s \in L^{p_2}, 
\ee
for every subset $E \subset \mathbb R$ with Lebesgue measure of $E$ is finite. 
Then 
\[
\|fg\|_{p_3,q_3),\theta} \leq D \|f\|_{p_1,q_1),\theta}\, \|g\|_{p_2,q_2),\theta},
\] for  $ f \in  l^{q_1), \theta}(L^{p_1})$ and $ g \in  l^{q_2), \theta}(L^{p_2}),$ 
where $D=c\,C.$
\end{lemma}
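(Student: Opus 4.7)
The plan is to reduce the claim to an iterated application of the two hypotheses via the mixed-norm identity $\|g\|_{p,q),\theta}=\bigl\|\,\|g\chi_{I_k}\|_{L^p}\,\bigr\|_{l^{q),\theta}}$ recorded in \eqref{km}. Applying this identity with exponents $(p_3,q_3)$ to the product $fg$, I rewrite the target quantity as $\|fg\|_{p_3,q_3),\theta}=\bigl\|\,\|(fg)\chi_{I_k}\|_{L^{p_3}}\,\bigr\|_{l^{q_3),\theta}}$, which exposes the natural two-step structure of the proof: first control the inner $L^{p_3}$-norms index-by-index, then control the outer grand sequence norm.

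For the inner step, I would fix $k\in X$ and apply the $L^p$-Hölder-type hypothesis \eqref{eq19} with $E=I_k=[k,k+1)$, which has Lebesgue measure one so that the hypothesis is legitimately applicable to $h=f$ and $s=g$. This yields the pointwise-in-$k$ bound $\|(fg)\chi_{I_k}\|_{L^{p_3}}\le C\,\|f\chi_{I_k}\|_{L^{p_1}}\,\|g\chi_{I_k}\|_{L^{p_2}}$, valid for every $k$.

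For the outer step, I would pass this bound under the $l^{q_3),\theta}$ norm. For each fixed $\varepsilon>0$ the classical space $l^{q_3(1+\varepsilon)}$ is a Banach lattice, so its norm is monotone on non-negative sequences; taking the weighted supremum in $\varepsilon$ preserves monotonicity, hence the grand norm $\|\cdot\|_{l^{q_3),\theta}}$ itself respects pointwise inequalities of non-negative sequences. Setting $x=\{\|f\chi_{I_k}\|_{L^{p_1}}\}_{k\in X}\in l^{q_1),\theta}$ and $y=\{\|g\chi_{I_k}\|_{L^{p_2}}\}_{k\in X}\in l^{q_2),\theta}$ (membership in these spaces being exactly what \eqref{km} asserts, given $f\in l^{q_1),\theta}(L^{p_1})$ and $g\in l^{q_2),\theta}(L^{p_2})$), I would invoke the sequence-space hypothesis \eqref{eq18} applied to the coordinate-wise product $xy$ to obtain $\|xy\|_{l^{q_3),\theta}}\le c\,\|x\|_{l^{q_1),\theta}}\,\|y\|_{l^{q_2),\theta}}=c\,\|f\|_{p_1,q_1),\theta}\,\|g\|_{p_2,q_2),\theta}$, the last equality again being \eqref{km}. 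Concatenating the two steps produces the asserted inequality with $D=c\,C$.

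The only substantive point that deserves explicit mention is the monotonicity of the grand sequence norm, since that is precisely what allows the inner $L^{p_3}$-Hölder bound to be transported through the outer grand norm; beyond that, the argument is a clean insertion of the two given hypotheses and requires no further estimate on $\|\cdot\|_{l^{q),\theta}}$ itself.
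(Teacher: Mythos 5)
Your proposal is correct and follows essentially the same route as the paper: rewrite $\|fg\|_{p_3,q_3),\theta}$ via \eqref{km}, apply \eqref{eq19} on each $I_k$, and then apply \eqref{eq18} to the resulting sequences of local norms. Your explicit remark on the monotonicity of $\|\cdot\|_{l^{q_3),\theta}}$ under pointwise domination of non-negative sequences is the one step the paper leaves implicit, and it is a worthwhile clarification rather than a deviation.
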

\begin{proof} 
Let $ f \in  l^{q_1), \theta}(L^{p_1})$ and $ g \in  l^{q_2), \theta}(L^{p_2}).$ Take $x=\lge\|f\chi_{I_k}\|_{L^{p_1}}\rge_{k \in X}, y=\lge\|g\chi_{I_k}\|_{L^{p_2}}\rge_{k \in X}.$
By the definition  \eqref{eq10}, $x \in l^{q_1), \theta}$  and $y \in l^{q_2), \theta}.$ Applying  \eqref{eq19} successively on the intervals $I_k=[k, k+1),$ with $h=f \chi_{I_k} \in L^{p_1}$ and $s=g \chi_{I_k} \in L^{p_2},$ and then using \eqref{eq18}, we obtain
\begin{align*}
\|fg\|_{p_3, q_3), \theta}&=\big\| \| (fg)\chi_{I_k}\|_{L^{p_3}}\big\|_{l^{q_3), \theta}}\\
& \le \big\| C\| f\chi_{I_k}\|_{L^{p_1}}\| g\chi_{I_k}\|_{L^{p_2}}\big\|_{l^{q_3), \theta}} \\
& = c\,C \big\| \| f\chi_{I_k}\|_{L^{p_1}}\big\|_{l^{q_1),\theta}}\big\| \| f\chi_{I_k}\|_{L^{p_2}}\big\|_{l^{q_2), \theta}}\\
& =D\|f\|_{\left.p_1, q_1\right), \theta} \,\|g\|_{\left.p_2, q_2\right), \theta},
\end{align*}
where $D=c\,C.$ Hence the assertion is proved.
\end{proof}

 Before stating the H\"{o}lder's inequality in the aforesaid spaces, we first define the small Lebesgue sequence spaces. Let $\theta>0$ and $1\leq q < \infty.$  The \emph{small Lebesgue sequence space}, denoted by ${l^{q)', \theta}},$ is the space of all sequences $y=\lge y_k\rge_{k \in X}$ such that
\begin{align}\label{a4} 
\|y\|_{l^{q)', \theta}} &:= \inf_{\substack{{|y_k| = \sum_{j \in X} y_{k,j}}\\{y_{k,j}\geq 0}}} \left\{ \sum_{j \in X} \inf_{\varepsilon > 0} \varepsilon^{\frac{-\theta}{q(1+\varepsilon)}} \left( \sum_{k \in X} y_{k,j}^{(q(1+\varepsilon))'} \right)^{\frac{1}{(q(1+\varepsilon))'}} \right\}<\infty,
\end{align}
where $(q(1+\varepsilon))'$ is the conjugate of $q(1+\varepsilon),$ for each $\varepsilon>0.$\\
First we give an example that for a given sequence $y=\lge y_k\rge_{k \in X,}$ such a decomposition exists, that is, for each $k \in X,$ 
each term $|y_k|$ can be expressed as an infinite sum of non-negative terms,
$|y_k| = \sum_{j \in X} y_{k,j}, \, y_{k,j} \geq 0,$ for all $ j.$
Let $y=\lge y_k= \frac{1}{2^k}\rge_{k \in \mathbb N},$ we may take \( y_{k,j} = \frac{1}{2^k} \cdot \frac{1}{2^j} \)
then
\[\sum_{j=1}^{\infty} y_{k,j} = \sum_{j=1}^{\infty} \frac{1}{2^k} \cdot \frac{1}{2^j} 
= \frac{1}{2^k} \sum_{j=1}^{\infty} \frac{1}{2^j} = \frac{1}{2^k} = y_k.\]
Also, a trivial decomposition of a non-negative sequence  $y=\lge y_k\rge_{k \in X}$ always exists as:
$$y_{k,j} = 
\begin{cases}
y_k , & j = 1 \\
0, & j \neq 1.
\end{cases}$$
Now we give an example of a sequence $y=\lge y_k\rge_{k \in X} \in {l^{q)', \theta}}.$\\
{\bf Example 2.}
Let  $m$ be a positive integer. $y= \lge y_k\rge_{k \in \mathbb Z},$ $y_k=$
$\begin{cases}
1 , & |k| \le m \\
0, & |k|>m.
\end{cases}$
Take 
 \( s_{k,j} = y_{k}\frac{1}{2^j}, \) where \( j \in \mathbb N. \) Clearly,$\lge s_{k,j}\rge_{j \in \mathbb N}$ is a decomposition of the sequence $y= \lge y_k\rge_{k \in \mathbb N}.$
Consider,
\begin{align*}
\|y\|_{l^{q)', \theta}} &=\inf_{\substack{{|y_k| = \sum_{j \in \mathbb N} y_{k,j},}\\{y_{k,j}\geq 0}}} \left\{ \sum_{j \in \mathbb N} \inf_{\varepsilon > 0} \varepsilon^{\frac{-\theta}{q(1+\varepsilon)}} \left( \sum_{k \in \mathbb Z } y_{k,j}^{(q(1+\varepsilon))'} \right)^{\frac{1}{(q(1+\varepsilon))'}} \right\}\\
& \leq \left\{ \sum_{j=1}^{\infty} \inf_{\varepsilon > 0} \varepsilon^{\frac{-\theta}{q1+\varepsilon)}} \left( \sum_{k=-\infty}^{\infty} s_{k,j}^{(q(1+\varepsilon))'} \right)^{\frac{1}{(q(1+\varepsilon))'}} \right\}\\
&=\sum_{j=1}^{\infty} \inf_{\varepsilon > 0}  \varepsilon^{\frac{-\theta}{q(1+\varepsilon)}} \frac{1}{2^j} \left( 2m+1 \right)^{\frac{1}{(q(1+\varepsilon))'}}\\
&\le (2m+1)\sum_{j=1}^{\infty} \inf_{\varepsilon > 0} \varepsilon^{\frac{-\theta}{q(1+\varepsilon)}} \frac{1}{2^j}\\
&=(2m+1)\inf_{\varepsilon > 0} \lr \varepsilon^{\frac{-1}{1+\varepsilon}} \rr^{\frac{\theta}{q}} \sum_{j=1}^{\infty} \frac{1}{2^j}= (2m+1)\lr\frac{1}{\mathrm{e} W(1 / \mathrm{e})}\rr^{\frac{\theta}{q}}<\infty.
\end{align*}
Therefore $y \in {l}^{q)', \theta}.$

 Now we prove the lattice property in ${l}^{q)', \theta}.$ To prove it, first we need the following lemma.

\begin{lemma}\label{l1}
Let  $\{ x_k \}_{k \in \mathbb Z}, \,\{ y_k \}_{k \in \mathbb Z} $ be sequences of non-negative real numbers such that $0 \le y_k \leq x_k,$  for all $ k \in \mathbb Z.$ Suppose  $x_k = \sum_{j \in \mathbb Z} x_{k,j}$ is  any non-negative decomposition of $\{ x_k \}_{k \in \mathbb Z}.$  Define a sequence 
\be\label{35}
z_{k,j} = 
\begin{cases} 
x_{k,j} - \max\left( y_k - \sum\limits_{i=-\infty}^{j-1} x_{k,i},\ 0 \right), & 
    \text{if } \sum\limits_{i=-\infty}^{j} x_{k,i} > y_k \\ 
0, & \text{otherwise}.
\end{cases}
\ee
 Then the following hold:
\begin{enumerate}[(a)]
\item  $0 \leq z_{k,j} \leq x_{k,j},\quad \text{for all} \quad k,\, j \in \mathbb Z.$

\item $y_k = \sum_{j\in \mathbb Z} (x_{k,j} - z_{k,j}), \quad\text{for all} \quad k,\, j \in \mathbb Z.$
\end{enumerate}
\end{lemma}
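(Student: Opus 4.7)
The plan is to prove (a) by a direct case split on the definition \eqref{35} of $z_{k,j}$, and to prove (b) by identifying a threshold index $j^{*}$ at which the cumulative partial sum first exceeds $y_{k}$, and then splitting the telescoping sum $\sum_{j}(x_{k,j}-z_{k,j})$ into three pieces accordingly. Throughout I would write $S_{j}:=\sum_{i=-\infty}^{j} x_{k,i}$; since all $x_{k,i}\ge 0$ and $\sum_{i\in\mathbb{Z}} x_{k,i}=x_{k}<\infty$, the sequence $\{S_{j}\}$ is non-decreasing with $S_{j}\to 0$ as $j\to -\infty$ and $S_{j}\to x_{k}$ as $j\to +\infty$, and $x_{k,j}=S_{j}-S_{j-1}$.

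For (a), if $S_{j}\le y_{k}$ then the ``otherwise'' branch gives $z_{k,j}=0\in[0,x_{k,j}]$ and there is nothing to check. If $S_{j}>y_{k}$, then $z_{k,j}=x_{k,j}-\max(y_{k}-S_{j-1},0)$; the upper bound $z_{k,j}\le x_{k,j}$ is immediate, and for the lower bound I would split again: if $y_{k}\le S_{j-1}$ the maximum equals $0$ and $z_{k,j}=x_{k,j}\ge 0$, while if $y_{k}>S_{j-1}$ the branch condition $S_{j}>y_{k}$ combined with $x_{k,j}=S_{j}-S_{j-1}$ yields $x_{k,j}>y_{k}-S_{j-1}\ge 0$, so $z_{k,j}\ge 0$.

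For (b), the case $y_{k}=x_{k}$ is immediate since $S_{j}\le x_{k}=y_{k}$ for every $j$, whence each $z_{k,j}=0$ and $\sum_{j}(x_{k,j}-z_{k,j})=x_{k}=y_{k}$. When $0<y_{k}<x_{k}$, the convergence properties of $S_{j}$ imply that $\{j\in\mathbb{Z}:S_{j}>y_{k}\}$ is non-empty and bounded below, so it admits a minimum $j^{*}$ (the corner case $y_{k}=0$ is handled directly by noting that $z_{k,j}=x_{k,j}$ whenever $S_{j}>0$). I would then split the sum into three pieces: for $j<j^{*}$, minimality forces $S_{j}\le y_{k}$ and hence $x_{k,j}-z_{k,j}=x_{k,j}$, contributing $S_{j^{*}-1}$; for $j=j^{*}$, we have $S_{j^{*}-1}\le y_{k}<S_{j^{*}}$, so the max equals $y_{k}-S_{j^{*}-1}$ and the contribution is $y_{k}-S_{j^{*}-1}$; for $j>j^{*}$, $S_{j-1}\ge S_{j^{*}}>y_{k}$ forces the max to vanish, so $z_{k,j}=x_{k,j}$ and the contribution is $0$. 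Summing gives $S_{j^{*}-1}+(y_{k}-S_{j^{*}-1})+0=y_{k}$, as required. The only non-routine aspect is the bookkeeping around the threshold index in the $\mathbb{Z}$-indexed setting, which is handled entirely by the convergence of the non-negative series $\sum_{i\in\mathbb{Z}} x_{k,i}$ together with the monotonicity of $S_{j}$; everything else is direct substitution into \eqref{35}.
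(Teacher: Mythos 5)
Your proposal is correct and follows essentially the same route as the paper: both identify the threshold index $\hat\jmath_k=\min\{j: \sum_{i\le j}x_{k,i}>y_k\}$ and split the sum $\sum_j(x_{k,j}-z_{k,j})$ into the pieces $j<\hat\jmath_k$, $j=\hat\jmath_k$, $j>\hat\jmath_k$, which telescope to $y_k$. Your treatment is in fact slightly more careful than the paper's, since you justify the existence of the minimum via $S_j\to 0$ as $j\to-\infty$ and handle the corner case $y_k=0$ (where that minimum may fail to exist) separately.
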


\begin{proof}
Case (i) Let $y_k = x_k$, for all  $ k \in \mathbb Z.$ Then  \(z_{k,j} = 0,\) for all $ k, j \in \mathbb Z.$ Hence  (a) and (b) hold trivially.\\
 Case(ii) Let  \(y_k < x_k,\) for $ k \in \mathbb Z.$ Define $ \hat{j}_k \in \mathbb Z,$ by 
    \[
    \hat{j}_k = \min \left\{ j \in \mathbb Z : \sum_{i=-\infty}^{j} x_{k,i} > y_k \right\}.
    \]
If $ j < \hat{j}_k$ then  $\displaystyle\sum_{i=-\infty}^{j} x_{k,i} \leq y_k$ and by \eqref{35}, we have $ z_{k,j} = 0.$\\
If $j = \hat{j}_k$ then $\displaystyle\sum_{i=-\infty}^{\hat{j}_k-1} x_{k,i} \leq y_k$  and $\sum_{i=-\infty}^{\hat{j}_k} x_{k,i} > y_k.$ Therefore by \eqref{35}, we have
        \[
        z_{k,\hat{j}_k} = x_{k,\hat{j}_k} - \left( y_k - \sum_{i=-\infty}^{\hat{j}_k-1} x_{k,i} \right)
        .\]\\
 If \(j > \hat{j}_k\) then by \eqref{35}, we have\(z_{k,j} = x_{k,j}.\)\\
It follows from all these three possibilities that (a) is satisfied. To verify (b), consider
\begin{align*}
\sum_{j \in \mathbb Z} (x_{k,j} - z_{k,j}) &= \sum_{j<\hat{j}_k} (x_{k,j} - 0) + (x_{k,\hat{j}_k} - z_{k,\hat{j}_k}) + \sum_{j>\hat{j}_k} (x_{k,j} - x_{k,j}) \\
        &= \sum_{j<\hat{j}_k} x_{k,j} + \left( y_k - \sum_{i=-\infty}^{\hat{j}_k-1} x_{k,i} \right) \\
        &= y_k.
\end{align*}
Hence (b) is satisfied. 
 \end{proof}

\begin{remark}
In \cite{fi}, Fiorenza has proved the continuous version of Lemma \ref{l1}. This lemma asserts that if there is a non-negative decomposition of a given non-negative sequence $\{ x_k \}_{k \in \mathbb Z}$ and  if another sequence $\{ y_k \}_{k \in \mathbb Z}$ satisfies $0 \le y_k \leq x_k,$ for all $ k \in \mathbb Z,$ then $\{ y_k \}_{k \in \mathbb Z}$ also admits a non-negative decomposition. Moreover, the lemma gives an explicit construction of such a decomposition.
\end{remark}  

\begin{theorem}\label{th9}
Lattice property holds in the Small Lebesgue sequence space, i.e., if $ x=\{ x_k \}_{k \in \mathbb Z}, \, y=\{ y_k \}_{k \in \mathbb Z} \in l^{q)', \theta} $ such that $0 \le y \leq x,$ i.e., $y_k \leq x_k,$ for all $ k \in \mathbb Z$ then $\|y\|_{l^{q)', \theta}} \leq \|x\|_{l^{q)', \theta}}.$
\end{theorem}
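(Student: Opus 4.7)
The plan is to use Lemma \ref{l1} to transport any admissible decomposition of $x$ into an admissible decomposition of $y$ that is pointwise dominated by it, and then exploit the monotonicity of the inner expression in the definition \eqref{a4} of $\|\cdot\|_{l^{q)', \theta}}$.

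First I would fix an arbitrary non-negative decomposition $x_k = \sum_{j \in \mathbb Z} x_{k,j}$ of $x$. Since $0 \le y_k \le x_k$ for every $k \in \mathbb Z$, Lemma \ref{l1} produces the auxiliary sequence $z_{k,j}$ defined by \eqref{35} and gives $0 \le z_{k,j} \le x_{k,j}$ together with $y_k = \sum_{j \in \mathbb Z}(x_{k,j} - z_{k,j})$. Setting $y_{k,j} := x_{k,j} - z_{k,j}$ therefore yields a non-negative decomposition of $y_k = |y_k|$ with the crucial pointwise bound $0 \le y_{k,j} \le x_{k,j}$ for all $k,j$.

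Next, I would use this pointwise bound to compare the inner expressions in \eqref{a4}. For every $j \in \mathbb Z$ and every $\varepsilon > 0$, since $(q(1+\varepsilon))' \ge 1$, the monotonicity of the $\ell^{(q(1+\varepsilon))'}$ norm gives
\[
\left(\sum_{k \in \mathbb Z} y_{k,j}^{(q(1+\varepsilon))'}\right)^{\frac{1}{(q(1+\varepsilon))'}} \le \left(\sum_{k \in \mathbb Z} x_{k,j}^{(q(1+\varepsilon))'}\right)^{\frac{1}{(q(1+\varepsilon))'}}.
\]
Multiplying by $\varepsilon^{-\theta/(q(1+\varepsilon))}$ and taking the infimum over $\varepsilon > 0$ on both sides preserves the inequality, and then summing over $j \in \mathbb Z$ gives
\[
\sum_{j \in \mathbb Z}\inf_{\varepsilon>0}\varepsilon^{\frac{-\theta}{q(1+\varepsilon)}}\!\left(\sum_{k} y_{k,j}^{(q(1+\varepsilon))'}\right)^{\frac{1}{(q(1+\varepsilon))'}} \le \sum_{j \in \mathbb Z}\inf_{\varepsilon>0}\varepsilon^{\frac{-\theta}{q(1+\varepsilon)}}\!\left(\sum_{k} x_{k,j}^{(q(1+\varepsilon))'}\right)^{\frac{1}{(q(1+\varepsilon))'}}.
\]

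Finally, since $\{y_{k,j}\}$ is a particular admissible decomposition of $y$, the left-hand side is an upper bound for the infimum defining $\|y\|_{l^{q)', \theta}}$; hence
\[
\|y\|_{l^{q)', \theta}} \le \sum_{j \in \mathbb Z}\inf_{\varepsilon>0}\varepsilon^{\frac{-\theta}{q(1+\varepsilon)}}\!\left(\sum_{k} x_{k,j}^{(q(1+\varepsilon))'}\right)^{\frac{1}{(q(1+\varepsilon))'}}.
\]
Because the decomposition $\{x_{k,j}\}$ of $x$ was arbitrary, I would then take the infimum over all such decompositions on the right and obtain $\|y\|_{l^{q)', \theta}} \le \|x\|_{l^{q)', \theta}}$.

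The only nontrivial step is the construction of the decomposition of $y$, but this is precisely what Lemma \ref{l1} delivers; the remainder is a routine transfer from a pointwise inequality through the $\ell^{(q(1+\varepsilon))'}$ quasi-norm, the infimum in $\varepsilon$, and the sum in $j$. I anticipate no additional obstacle beyond verifying that the constructed $\{y_{k,j}\}$ is indeed admissible in \eqref{a4}, which is immediate from parts (a) and (b) of Lemma \ref{l1}.
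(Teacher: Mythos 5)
Your proposal is correct and follows essentially the same route as the paper: both fix an arbitrary non-negative decomposition of $x$, invoke Lemma \ref{l1} to produce the dominated decomposition $y_{k,j}=x_{k,j}-z_{k,j}$ of $y$, pass the pointwise bound $0\le y_{k,j}\le x_{k,j}$ through the $\ell^{(q(1+\varepsilon))'}$ norm, the infimum in $\varepsilon$, and the sum in $j$, and finally take the infimum over decompositions of $x$. The only difference is presentational: the paper writes the argument as a single chain of inequalities starting from $\|x\|_{l^{q)',\theta}}$, whereas you spell out the same steps in the reverse order.
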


\begin{proof} In view of Lemma \ref{l1}, we have
\begin{align*}
\|x\|_{l^{q)', \theta}}=& \inf_{\substack{{x_k = \sum_{j \in \mathbb Z} x_{k,j}}\\{x_{k,j}\geq 0}}} \left\{ \sum_{j \in \mathbb Z} \inf_{\varepsilon > 0} \varepsilon^{\frac{-\theta}{q(1+\varepsilon)}} \left( \sum_{k \in \mathbb Z} x_{k,j}^{(q(1+\varepsilon))'} \right)^{\frac{1}{(q(1+\varepsilon))'}} \right\}\\
& \ge \inf_{\substack{{x_k = \sum_{j \in \mathbb Z} x_{k,j}}\\{x_{k,j}\geq 0}}} \left\{ \sum_{j \in \mathbb Z} \inf_{\varepsilon > 0} \varepsilon^{\frac{-\theta}{q(1+\varepsilon)}} \left( \sum_{k \in \mathbb Z} (x_{k,j}-z_{k,j})^{(q(1+\varepsilon))'} \right)^{\frac{1}{(q(1+\varepsilon))'}} \right\}\\
& \ge \inf_{\substack{{y_k = \sum_{j \in \mathbb Z} y_{k,j}}\\{y_{k,j}\geq 0}}} \left\{ \sum_{j \in \mathbb Z} \inf_{\varepsilon > 0} \varepsilon^{\frac{-\theta}{q(1+\varepsilon)}} \left( \sum_{k \in \mathbb Z} y_{k,j}^{(q(1+\varepsilon))'} \right)^{\frac{1}{(q(1+\varepsilon))'}} \right\}\\
&=\|y\|_{l^{q)', \theta}}.
\end{align*}
Hence done.
\end{proof}
Next, we prove that the small Lebesgue sequence space is  complete.

\begin{theorem}\label{t2}
For $\theta>0$ and $1\leq q< \infty$, the space ${l^{q)', \theta}}$ is a Banach space.
\begin{proof}
It is sufficient to prove that  for any non negative sequence  $\{y^{(n)}\}_{ n \in \mathbb{N}}$ which is absolutely summable in ${l^{q)',\theta}}$  is summable in ${l^{q)',\theta}}.$
That is, 
\be
\nonumber\Bigg\| \sum_{n=1}^\infty y^{(n)}\Bigg\|_{l^{q)',\theta}} \leq \sum_{n=1}^\infty\|y^{(n)}\|_{l^{q)',\theta}}.
\ee
Since  $\{y^{(n)}\}_{ n \in \mathbb{N}}$ is absolutely summable in ${l^{q)',\theta}},$ we have
\[
\sum_{n=1}^\infty\|y^{(n)}\|_{l^{q)',\theta}}<\infty.
\]
By the definition of the norm of small Lebesgue sequence space, for given  $\delta>0$ there exists a non negative decomposition $\{y_{k,j}^{(n)}\}_{ k \in X , j \in X } $  of $y^{(n)}=\{y_k^{(n)}\}_{k \in X}$ such that

$y_k^{(n)}=\sum_{j \in X}y_{k,j}^{(n)}$
and 
\be
\nonumber
  \sum_{j\in X}\inf_{\varepsilon > 0} \varepsilon^{\frac{-\theta}{q(1+\varepsilon)}} \left( \sum_{k \in X} (y_{k,j}^{(n)})^{(q(1+\varepsilon))'} \right)^{\frac{1}{(q(1+\varepsilon))'}}  < \| y^{(n)}\|_{l^{q)', \theta}}  + \frac{\delta}{2^n}.
\ee
Then for any $m \in \mathbb N,$
\begin{align*}
\Bigg\| \sum_{n=1}^m y^{(n)}\Bigg\|_{l^{q)',\theta}} 
& \le\inf_{\substack{{\sum_{n=1}^m y^{(n)}_k = \sum_{j \in X} \sum_{n=1}^m s^{(n)}_{k,j}}\\{s^{(n)}_{k,j}\geq 0}}} \left\{ \sum_{j \in X} \inf_{\varepsilon > 0} \varepsilon^{\frac{-\theta}{q(1+\varepsilon)}} \left( \sum_{k \in X} \lr\sum_{n=1}^m s^{(n)}_{k,j} \rr^{(q(1+\varepsilon))'} \right)^{\frac{1}{(q(1+\varepsilon))'}} \right\}\\
&\le\inf_{\substack{{\sum_{n=1}^m y^{(n)}_k = \sum_{j \in X} \sum_{n=1}^m s^{(n)}_{k,j}}\\{s^{(n)}_{k,j}\geq 0}}} \left\{ \sum_{j \in X} \inf_{\varepsilon > 0} \varepsilon^{\frac{-\theta}{q(1+\varepsilon)}}  \sum_{n=1}^m \left( \sum_{k \in X} \lr s^{(n)}_{k,j}\rr ^{(q(1+\varepsilon))'} \right)^{\frac{1}{(q(1+\varepsilon))'}} \right\}\\
&\le\sum_{n=1}^m\sum_{j \in X} \inf_{\varepsilon > 0} \varepsilon^{\frac{-\theta}{q(1+\varepsilon)}} \left(   \sum_{k \in X} \lr y^{(n)}_{k,j}\rr ^{(q(1+\varepsilon))'} \right)^{\frac{1}{(q(1+\varepsilon))'}}\\
&< \sum_{n=1}^m \lr\| y^{(n)}\|_{l^{q)', \theta}}  + \frac{\delta}{2^n}\rr\\
&\le \sum_{n=1}^\infty \|y_k^{(n)}\|_{l^{q)',\theta}} + \delta, ~~~~ \text{for all} \,\, \delta>0 ,\,\, m \in \mathbb N.
\end{align*}
Therefore, we have
\be
\nonumber\Bigg\| \sum_{n=1}^m y^{(n)}\Bigg\|_{l^{q)',\theta}} \leq \sum_{n=1}^\infty\|y^{(n)}\|_{l^{q)',\theta}}, ~~~~ \text{for all} \,\, m \in \mathbb N.
\ee
Hence we have proved the assertion.
\end{proof}
\end{theorem}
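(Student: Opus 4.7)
The plan is to establish completeness of $l^{q)',\theta}$ via the Riesz--Fischer criterion: it suffices to show that for every sequence $\{y^{(n)}\}_{n \in \mathbb{N}} \subset l^{q)',\theta}$ with $\sum_n \|y^{(n)}\|_{l^{q)',\theta}} < \infty$, the series $\sum_n y^{(n)}$ converges in norm, with
\[
\Bigl\| \sum_{n=1}^\infty y^{(n)} \Bigr\|_{l^{q)',\theta}} \le \sum_{n=1}^\infty \|y^{(n)}\|_{l^{q)',\theta}}.
\]
Using the lattice property of Theorem \ref{th9}, I first reduce to sequences of non-negative terms, which streamlines the handling of decompositions in the infimum defining the norm.

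The core construction is to concatenate near-optimal decompositions. Fix $\delta > 0$. For each $n$, using the definition of infimum in \eqref{a4}, select a non-negative decomposition $y^{(n)}_k = \sum_{j \in X} y^{(n)}_{k,j}$ satisfying
\[
\sum_{j \in X} \inf_{\varepsilon > 0} \varepsilon^{-\theta/(q(1+\varepsilon))} \Bigl( \sum_{k \in X} (y^{(n)}_{k,j})^{(q(1+\varepsilon))'} \Bigr)^{1/(q(1+\varepsilon))'} < \|y^{(n)}\|_{l^{q)',\theta}} + \delta \cdot 2^{-n}.
\]
Next, fix any bijection $\pi : \mathbb{N} \times X \to X$ and define $z_{k,\pi(n,j)} := y^{(n)}_{k,j}$. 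Because $\sum_m z_{k,m} = \sum_n \sum_j y^{(n)}_{k,j} = \sum_n y^{(n)}_k$ (using Tonelli for the rearrangement of non-negative double sums), the family $\{z_{k,m}\}_{k,m \in X}$ is a feasible non-negative decomposition of $\sum_n y^{(n)}$. Plugging it into \eqref{a4} and reindexing the outer sum through $\pi$ yields
\[
\Bigl\|\sum_{n=1}^\infty y^{(n)}\Bigr\|_{l^{q)',\theta}} \le \sum_{n=1}^\infty \sum_{j \in X} \inf_{\varepsilon > 0} \varepsilon^{-\theta/(q(1+\varepsilon))} \Bigl( \sum_{k \in X} (y^{(n)}_{k,j})^{(q(1+\varepsilon))'} \Bigr)^{1/(q(1+\varepsilon))'} < \sum_{n=1}^\infty \|y^{(n)}\|_{l^{q)',\theta}} + \delta.
\]
Sending $\delta \to 0$ gives the Riesz--Fischer bound, and applying the same argument to the tails $\sum_{n \ge N} y^{(n)}$ (whose $\|\cdot\|_{l^{q)',\theta}}$-norm tends to $0$ by absolute summability) delivers convergence of the partial sums in $l^{q)',\theta}$, with limit in the space.

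The main obstacle I anticipate is the careful bookkeeping required to avoid an illegitimate swap of infimum and sum: the inequality $\inf_\varepsilon \sum_n f_n(\varepsilon) \le \sum_n \inf_\varepsilon f_n(\varepsilon)$ fails in general, so one cannot straightforwardly apply a Minkowski-type triangle inequality for fixed $\varepsilon$ and then distribute the infimum. The bijection $\pi$ sidesteps this by merging the outer series index $n$ with the per-term decomposition index $j$ into a single index in $X$, allowing each term's near-optimal $\varepsilon_n$ to be chosen independently. A secondary point to verify carefully is that the $z_{k,m}$ indeed constitute an admissible decomposition, i.e.\ that the non-negative double sums can be reorganized without loss, which is immediate from Tonelli.
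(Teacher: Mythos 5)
Your proof is correct, and at the decisive step it takes a genuinely different --- and in fact sounder --- route than the paper. The paper keeps the decomposition index $j$ fixed, writes the $j$-th piece of a decomposition of $\sum_{n=1}^m y^{(n)}$ as $\sum_{n=1}^m y^{(n)}_{k,j}$, applies Minkowski's inequality in $l^{(q(1+\varepsilon))'}$ for each fixed $\varepsilon$, and then passes from
\[
\inf_{\varepsilon>0}\varepsilon^{\frac{-\theta}{q(1+\varepsilon)}}\sum_{n=1}^m\left(\sum_{k\in X}\bigl(y^{(n)}_{k,j}\bigr)^{(q(1+\varepsilon))'}\right)^{\frac{1}{(q(1+\varepsilon))'}}
\quad\text{to}\quad
\sum_{n=1}^m\inf_{\varepsilon>0}\varepsilon^{\frac{-\theta}{q(1+\varepsilon)}}\left(\sum_{k\in X}\bigl(y^{(n)}_{k,j}\bigr)^{(q(1+\varepsilon))'}\right)^{\frac{1}{(q(1+\varepsilon))'}},
\]
i.e.\ it invokes $\inf_\varepsilon\sum_n\le\sum_n\inf_\varepsilon$, which is exactly the interchange you flag as illegitimate (the infimum is superadditive, so the inequality runs the other way); the paper's chain of estimates therefore has a gap precisely where you anticipated one. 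Your concatenation through a bijection $\pi:\mathbb N\times X\to X$, taking $z_{k,\pi(n,j)}=y^{(n)}_{k,j}$ as a single admissible decomposition of $\sum_n y^{(n)}$ (legitimate by Tonelli for nonnegative double series), lets each block $(n,j)$ carry its own near-optimal $\varepsilon$, so no interchange is ever needed; this is the standard device in Fiorenza's treatment of small Lebesgue spaces and it yields the countable triangle inequality for nonnegative elements directly, whereas the paper's route would at best give the finite partial-sum estimate and only after the flawed swap. Two small points are worth writing out explicitly: the reduction from general to nonnegative sequences via the lattice property (Theorem \ref{th9}), which you indicate, and the finiteness of $\sum_n y^{(n)}_k$ for each $k$, which follows because coordinate evaluations are bounded on $l^{q)',\theta}$ (the weight $\varepsilon^{-\theta/(q(1+\varepsilon))}$ is bounded below by a positive constant and each $\ell^{r}$-norm dominates every coordinate). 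With those remarks, your tail estimate completes the Riesz--Fischer criterion and the argument stands.
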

\begin{remark}
Following the similar steps, the triangle inequality can be proved easily in the small Lebesgue sequence space.
\end{remark}
\begin{theorem}
Let $1 \le q <\infty$ and $\theta >0$ be given. Suppose $x=\lge x_k\rge_{k \in X} \in l^{q),{\theta}}$and  $y=\lge y_k\rge_{k \in X} \in  l^{q)',{\theta}}$then
\be\label{a3}
\sum_{k\in X} |x_k y_k |\leq \|x\|_{l^{q), \theta}} \|y\|_{l^{q)', \theta}}.
\ee
\end{theorem}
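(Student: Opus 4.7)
The plan is to use duality at the ``inner'' Lebesgue-sequence level (the classical Hölder inequality in $l^{q(1+\varepsilon)}$ versus $l^{(q(1+\varepsilon))'}$) and then transfer it to the grand/small pair by inserting the weight $\varepsilon^{\theta/q(1+\varepsilon)}$ and its reciprocal, which multiply to $1$. This lets the supremum defining $\|x\|_{l^{q),\theta}}$ and the infimum defining $\|y\|_{l^{q)',\theta}}$ fit together without ever needing a sup/inf interchange.

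First I would fix an arbitrary non-negative decomposition $|y_k|=\sum_{j\in X} y_{k,j}$. Monotone convergence (or just Tonelli for non-negative terms) gives
\[
\sum_{k\in X}|x_k y_k| \;\le\; \sum_{k\in X}\sum_{j\in X}|x_k|\,y_{k,j}\;=\;\sum_{j\in X}\sum_{k\in X}|x_k|\,y_{k,j}.
\]
For each fixed $j$ and each $\varepsilon>0$, classical Hölder on $l^{q(1+\varepsilon)}$ and $l^{(q(1+\varepsilon))'}$ yields
\[
\sum_{k\in X}|x_k|\,y_{k,j}\;\le\;\|x\|_{l^{q(1+\varepsilon)}}\Big(\sum_{k\in X}y_{k,j}^{(q(1+\varepsilon))'}\Big)^{\!1/(q(1+\varepsilon))'}.
\]

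Next I would insert the identity $1=\varepsilon^{\theta/q(1+\varepsilon)}\cdot\varepsilon^{-\theta/q(1+\varepsilon)}$ on the right-hand side and bound the $\varepsilon^{\theta/q(1+\varepsilon)}\|x\|_{l^{q(1+\varepsilon)}}$ factor by its supremum $\|x\|_{l^{q),\theta}}$. This gives, for every $\varepsilon>0$,
\[
\sum_{k\in X}|x_k|\,y_{k,j}\;\le\;\|x\|_{l^{q),\theta}}\cdot\varepsilon^{-\theta/q(1+\varepsilon)}\Big(\sum_{k\in X}y_{k,j}^{(q(1+\varepsilon))'}\Big)^{\!1/(q(1+\varepsilon))'}.
\]
The left-hand side is independent of $\varepsilon$, so I can take the infimum over $\varepsilon>0$ of the right-hand side while keeping the inequality valid.

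Summing the resulting estimate in $j$, pulling the constant $\|x\|_{l^{q),\theta}}$ out, and finally taking the infimum over all admissible non-negative decompositions $|y_k|=\sum_j y_{k,j}$ on the right, I arrive at
\[
\sum_{k\in X}|x_k y_k|\;\le\;\|x\|_{l^{q),\theta}}\,\|y\|_{l^{q)',\theta}},
\]
which is exactly \eqref{a3}. The only delicate point is the legality of interchanging sums in the very first step (justified by non-negativity of the $y_{k,j}$) and making sure the infimum in $\varepsilon$ is taken at the correct place; since the bound holds pointwise in $\varepsilon$ and the left-hand side is $\varepsilon$-free, no min–max interchange is ever needed, so there is no real obstacle.
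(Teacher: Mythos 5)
Your proposal is correct and follows essentially the same route as the paper's proof: fix a non-negative decomposition of $|y_k|$, interchange the sums, apply classical H\"older in $l^{q(1+\varepsilon)}$--$l^{(q(1+\varepsilon))'}$ for each $j$, insert the weight $\varepsilon^{\theta/q(1+\varepsilon)}$ and its reciprocal, pass to the supremum in $x$ and the infimum in $\varepsilon$ (legitimate since the left side is $\varepsilon$-free), then sum over $j$ and take the infimum over decompositions. No gaps; this matches the paper's argument step for step.
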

\begin{proof}Let $x=\lge x_k\rge_{k \in X} \in l^{q),{\theta}}$ and  $y=\lge y_k\rge_{k \in X} \in  l^{q)',{\theta}}.$ Suppose 
$|y_k| = \sum_{j\in X} y_{k,j}, \,\, y_{k,j} \geq 0,$ is a 
decomposition of $y=\lge y_k\rge_{k \in X}.$ For $\varepsilon>0,$ applying H\"{o}lder's inequality for the exponents $q(1+\varepsilon)$ and  $(q(1+\varepsilon))'$ in the classical Lebesgue sequence spaces,  we get
\begin{align}\label{a1}
\nonumber\sum_{k \in X} |x_k| y_{k,j} &\leq \left( \sum_{k \in X} |x_k|^{q(1+\varepsilon)} \right)^{\frac{1}{q(1+\varepsilon)}} \left( \sum_{k \in X} y_{k,j}^{(q(1+\varepsilon))^{\prime}} \right)^{\frac{1}{(q(1+\varepsilon))^{\prime}}}\\\nonumber
&=\varepsilon^{\frac{\theta}{q(1+\varepsilon)}}\left( \sum_{k \in X} |x_k|^{q(1+\varepsilon)} \right)^{\frac{1}{q(1+\varepsilon)}}\varepsilon^{\frac{-\theta}{q(1+\varepsilon)}}  \left( \sum_{k \in X} y_{k,j}^{(q(1+\varepsilon))^{\prime}} \right)^{\frac{1}{(q(1+\varepsilon))^{\prime}}}\\\nonumber
&  \leq  \|x\|_{l^{q),{\theta}}} \,\,\varepsilon^{\frac{-\theta}{q(1+\varepsilon)}}  \left( \sum_{k \in X} y_{k,j}^{(q(1+\varepsilon))^{\prime}} \right)^{\frac{1}{(q(1+\varepsilon))^{\prime}}}\\
&  \leq  \|x\|_{l^{q),{\theta}}} \,\,\inf_{\varepsilon>0}\varepsilon^{\frac{-\theta}{q(1+\varepsilon)}}  \left( \sum_{k \in X} y_{k,j}^{(q(1+\varepsilon))^{\prime}} \right)^{\frac{1}{(q(1+\varepsilon))^{\prime}}}.
\end{align}
Now, by using \eqref{a1}, we obtain
\begin{align*}
\sum_{k\in X} |x_k y_k |&=\sum_{k\in X} |x_k|\lr\sum_{j\in X} y_{k,j}\rr=
\sum_{j\in X}\sum_{k\in X}|x_k|y_{k,j}\\
& \le \sum_{j\in X} \|x\|_{l^{q),{\theta}}} \,\,\inf_{\varepsilon>0}\varepsilon^{\frac{-\theta}{q(1+\varepsilon)}}  \left( \sum_{k \in X} y_{k,j}^{(q(1+\varepsilon))^{\prime}} \right)^{\frac{1}{(q(1+\varepsilon))^{\prime}}}\\
&\le\|x\|_{l^{q),{\theta}}}\inf_{\substack{{|y_k| = \sum_{j \in X} y_{k,j}}\\{y_{k,j}\geq 0}}}\left\{\sum_{j\in X} \inf_{\varepsilon>0}\varepsilon^{\frac{-\theta}{q(1+\varepsilon)}}  \left( \sum_{k \in X} y_{k,j}^{(q(1+\varepsilon))^{\prime}} \right)^{\frac{1}{(q(1+\varepsilon))^{\prime}}}\right\}\\
&=\|x\|_{l^{q), \theta}} \|y\|_{l^{q)', \theta}}.
\end{align*}
Hence proved.
\end{proof}

 Let  $1 \leq p < \infty,$  $1 \leq q < \infty$, and $\theta > 0$ then the \emph{auxiliary space of the grand amalgam Lebesgue function space,} denoted by  $l^{q)', \theta}(L^p),$ is the collection of all complex valued measurable functions $f$  defined on $\displaystyle\bigcup_{k\in X} I_k,$ where $I_k=[k, k+1),$ with $ f\chi_{I_k} \in L^p,$ for each $ k \in X,$ such that
\begin{align*} 
\|f\|_{p, q)', \theta} &:= \inf_{\substack{{\| f\chi_{I_k} \|_{L^p} = \sum_{j \in X} y_{k,j}}\\{y_{k,j}\geq 0}}} \left\{ \sum_{j \in X} \inf_{\varepsilon > 0} \varepsilon^{\frac{-\theta}{q(1+\varepsilon)}} \left( \sum_{k \in X} y_{k,j}^{(q(1+\varepsilon))'} \right)^{\frac{1}{(q(1+\varepsilon))'}} \right\}<\infty,
\end{align*}
where $\frac{1}{q(1+\varepsilon)}+\frac{1}{(q(1+\varepsilon))'}=1,$ for every $\varepsilon>0.$ In view of \eqref{a4} this norm can also be written as:
\be
\nonumber \|f\|_{p, q)', \theta}=\big\|\|f\chi_{I_k}\|_{L^p}\big\|_{l^{q)', \theta}}.
\ee 
In view of the Theorem 1 of \cite{ho} and Theorem \ref{t2}, auxiliary space of the grand amalgam Lebesgue function space is a Banach space.\\
Now, we prove the \noindent\emph{ H\"{o}lder's inequality on grand amalgam  Lebesgue function space}.

\begin{theorem}
Let $ \theta>0,\,\, X= \mathbb Z$ and $1 \le p, \, q <\infty.$ If \( g\in l^{q), \theta}(L^p) \) and \( f \in l^{{q})',\theta}(L^{p'}) \) then \text{$gf \in L^1 \text{ and }$}
\be\label{a6}
\int_{\mathbb R} |g(x)f(x)| dx \leq \|g\|_{p,q),\theta}\, \|f\|_{p',q)',\theta},
\ee
where $\frac{1}{p}+\frac{1}{p'}=1.$
\end{theorem}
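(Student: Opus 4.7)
The plan is to combine two separate Hölder-type inequalities: the classical one in $L^p$ applied locally on each unit interval $I_k$, and the grand sequence-space Hölder inequality \eqref{a3} established in this section. The scalar function $\|g\chi_{I_k}\|_{L^p}$ (indexed by $k\in\mathbb Z$) and $\|f\chi_{I_k}\|_{L^{p'}}$ will serve as the bridge between the pointwise and the sequence-level estimates.

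First I would split the integral over $\mathbb R$ as a sum over the unit intervals $I_k = [k,k+1)$, writing
\[
\int_{\mathbb R}|g(x)f(x)|\,dx \;=\; \sum_{k\in\mathbb Z}\int_{I_k}|g(x)f(x)|\,dx.
\]
On each $I_k$, the classical Hölder inequality with exponents $p$ and $p'$ gives
\[
\int_{I_k}|g(x)f(x)|\,dx \;\le\; \|g\chi_{I_k}\|_{L^p}\,\|f\chi_{I_k}\|_{L^{p'}}.
\]
Summing over $k$, I obtain
\[
\int_{\mathbb R}|g(x)f(x)|\,dx \;\le\; \sum_{k\in\mathbb Z}\|g\chi_{I_k}\|_{L^p}\,\|f\chi_{I_k}\|_{L^{p'}}.
\]

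Next, I would introduce the two non-negative sequences $x=\{x_k\}_{k\in\mathbb Z}$ with $x_k:=\|g\chi_{I_k}\|_{L^p}$ and $y=\{y_k\}_{k\in\mathbb Z}$ with $y_k:=\|f\chi_{I_k}\|_{L^{p'}}$. By the alternative expression \eqref{km} for the norm on $l^{q),\theta}(L^p)$, we have $x\in l^{q),\theta}$ with $\|x\|_{l^{q),\theta}}=\|g\|_{p,q),\theta}$, and by the defining formula for the auxiliary space $l^{q)',\theta}(L^{p'})$, we have $y\in l^{q)',\theta}$ with $\|y\|_{l^{q)',\theta}}=\|f\|_{p',q)',\theta}$. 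Applying the sequence Hölder inequality \eqref{a3} to $x$ and $y$ then yields
\[
\sum_{k\in\mathbb Z}x_k\,y_k \;\le\; \|x\|_{l^{q),\theta}}\,\|y\|_{l^{q)',\theta}} \;=\; \|g\|_{p,q),\theta}\,\|f\|_{p',q)',\theta},
\]
which, combined with the previous chain of inequalities, is exactly \eqref{a6}. The finiteness of the right-hand side forces $gf\in L^1(\mathbb R)$.

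There is no real obstacle here: the proof is a clean two-step reduction, with the classical $L^p$ Hölder handling the local (function) side and \eqref{a3} handling the global (sequence) side. The only point that deserves a short verification is the identification $\|y\|_{l^{q)',\theta}}=\|f\|_{p',q)',\theta}$, which is immediate from the definition of the auxiliary norm in terms of decompositions of $\|f\chi_{I_k}\|_{L^{p'}}$.
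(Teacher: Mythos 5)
Your proposal is correct and follows essentially the same route as the paper: classical H\"older with exponents $p,p'$ applied locally on each $I_k$, followed by the sequence-level H\"older inequality \eqref{a3} applied to the sequences $\{\|g\chi_{I_k}\|_{L^p}\}_k$ and $\{\|f\chi_{I_k}\|_{L^{p'}}\}_k$, using the identifications \eqref{km} and the definition of the auxiliary norm. No gaps.
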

\begin{proof}
Let \( g \in l^{q),\theta}(L^p) \)  and \( f \in l^{q)',\theta}(L^{p'}). \) Using the H\"{o}lder's inequality for the exponents $p$ and  $p'$ in the  Lebesgue spaces, we get
\begin{equation}\label{a2}
\int_{I_k} |g(x)f(x)| dx \leq \| g \chi_{I_k}\|_{L^p} \|f \chi_{I_k}\|_{L^{p'}}, \,\, \text{for} \, \, k \in X.
\end{equation}
Using \eqref{a2} and  H\"{o}lder's inequality \eqref{a3} in the grand  Lebesgue sequence spaces, we obtain 
\begin{align*}
\int_{\mathbb R} |g(x)f(x)| dx&=\sum_{k \in X} \int_{I_k} |g(x)f(x)| dx \\
& \leq \sum_{k \in X} \| g \chi_{I_k}\|_{L^p} \,\|f \chi_{I_k}\|_{L^{p'}}\\
&\leq \big\|\| g \chi_{I_k}\|_{L^p}\big\|_{l^{q), \theta}}\, \left\|  \|f\chi_{I_k}\|_{L^{p'}} \right\|_{l^{q)', \theta}}\\
&=\|g\|_{p,q),\theta} \|f\|_{p',q)',\theta}. 
\end{align*}
Hence done.
\end{proof}

\begin{remark}
Since the H\"{o}lder's inequality holds in both  the grand  Lebesgue sequence spaces $l^{q), \theta}$ and the classical Lebesgue space $L^p,$ it also holds in grand amalgam Lebesgue function space $l^{q), \theta}(L^p).$ This is the general fact concerning the amalgamation of two normed spaces, see \cite{fs}.
\end{remark}

In the next theorem, we show that the characteristic function  $\chi_E$ on bounded measurable subset $ E \subset \mathbb R$ belongs to $l^{q),\theta}(L^p)$ and $l^{q)',\theta}(L^p)$ as well. Also, we show that the condition of boundedness is necessary.

\begin{theorem}\label{th10} 
Let E be  any bounded subset of $\mathbb R.$ Then 
\begin{enumerate}[(a)]
\item $\chi_ E  \in l^{q),\theta}(L^p).$ 
\item $\chi_ E  \in l^{q)',\theta}(L^p).$
\item there exists a constant $c_E>0$ such that 
\be\label{eq21} 
\displaystyle\int_E|g(x)| d x \leq c_E\|g\|_{p, q), \theta},
\ee
for every measurable function $g \in l^{q),\theta}(L^p).$
\end{enumerate}
\end{theorem}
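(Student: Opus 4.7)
The overall strategy is to exploit the fact that a bounded set $E\subset\mathbb{R}$ meets only finitely many of the unit intervals $I_k=[k,k+1)$. Choose $N\in\mathbb{N}$ large enough that $E\subset[-N,N+1)$, so $\chi_E\chi_{I_k}=\chi_{E\cap I_k}$ is zero whenever $|k|>N$, and $\|\chi_E\chi_{I_k}\|_{L^p}=|E\cap I_k|^{1/p}\le 1$ otherwise. Thus both relevant sequences reduce to finitely supported, uniformly bounded sequences; parts (a) and (b) then amount to showing that such a sequence sits in $l^{q),\theta}$ and in $l^{q)',\theta}$ respectively, and (c) will follow by applying the H\"{o}lder inequality \eqref{a6} to $g$ and $\chi_E$.

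For (a), I would estimate directly from the definition \eqref{eq10}: for every $\varepsilon>0$,
\[
\varepsilon^{\frac{\theta}{q(1+\varepsilon)}}\Bigl(\sum_{|k|\le N}\|\chi_E\chi_{I_k}\|_{L^p}^{q(1+\varepsilon)}\Bigr)^{\!\frac{1}{q(1+\varepsilon)}}\le \varepsilon^{\frac{\theta}{q(1+\varepsilon)}}(2N+1)^{\frac{1}{q(1+\varepsilon)}}.
\]
Taking the supremum over $\varepsilon>0$, each factor is bounded separately (the first by Remark~\ref{r1}, the second by $(2N+1)^{1/q}$ since $(2N+1)^{1/q(1+\varepsilon)}$ is monotone in $\varepsilon$). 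This gives a finite bound for $\|\chi_E\|_{p,q),\theta}$, proving (a) and incidentally producing an explicit constant.

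For (b), the key observation is that the sequence $\{\|\chi_E\chi_{I_k}\|_{L^p}\}_{k\in\mathbb{Z}}$ is dominated pointwise by the indicator sequence $y_k=1$ for $|k|\le N$, $y_k=0$ otherwise, which was shown to belong to $l^{q)',\theta}$ in Example~2 (with $m=N$). Invoking the lattice property of the small Lebesgue sequence space (Theorem~\ref{th9}) immediately yields $\{\|\chi_E\chi_{I_k}\|_{L^p}\}\in l^{q)',\theta}$, i.e. $\chi_E\in l^{q)',\theta}(L^p)$. The main subtlety I anticipate is merely verifying the pointwise domination carefully in the indexing; no new machinery is required.

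For (c), I would apply the H\"{o}lder inequality \eqref{a6} with $\chi_E$ in the auxiliary factor: using part (b) (applied with exponent $p'$ in place of $p$, when $p>1$; the case $p=1$ is handled by replacing $p'$ by a direct $L^1$--$L^\infty$ bound on each $I_k$), we have $\chi_E\in l^{q)',\theta}(L^{p'})$, and hence
\[
\int_E|g(x)|\,dx=\int_{\mathbb{R}}|g(x)|\chi_E(x)\,dx\le \|g\|_{p,q),\theta}\,\|\chi_E\|_{p',q)',\theta}.
\]
Setting $c_E:=\|\chi_E\|_{p',q)',\theta}$, which is finite by part (b), yields the required inequality \eqref{eq21}. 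The only potential obstacle is the borderline exponent $p=1$, which I would dispatch by the direct estimate $\int_E|g|\,dx\le\sum_{|k|\le N}\|g\chi_{I_k}\|_{L^1}$ followed by H\"{o}lder in the grand sequence space against the finitely supported indicator sequence, reproducing the same conclusion without invoking $L^\infty$.
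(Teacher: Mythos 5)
Your proposal is correct and follows essentially the same route as the paper: for (a) the observation that a bounded $E$ meets only finitely many intervals $I_k$ together with the bound on $\varepsilon^{\theta/(q(1+\varepsilon))}$, for (b) domination by the finite indicator sequence of Example~2 combined with the lattice property (Theorem~\ref{th9}), and for (c) the H\"older inequality \eqref{a6} with $c_E=\|\chi_E\|_{p',q)',\theta}$. Your extra care with the borderline exponent $p=1$ (where $p'=\infty$ falls outside the paper's stated definition of the auxiliary space) addresses a point the paper's proof silently glosses over.
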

\begin{proof}
\begin{enumerate}[(a)]
\item Let  $ E $ be any bounded subset of $\mathbb R$. This implies that there exists positive integer $M$ such that $ E \subset [-M, M].$ Therefore, 
 $ E \cap [k, k+1) = \emptyset,$ for every $k$ such that $ k <-M $ and $k \ge M. $ Hence Lebesgue measure 
$|E \cap I_k|$ is non-zero for at the most $2M$ intervals $I_k$.
\begin{align*}
\|\chi_E\|_{p,q),\theta} &= \sup_{\varepsilon>0 } \lr {\varepsilon^{\theta}}  \sum_{k \in  X} \left(\int_{k}^{k+1} |\chi_E|^{p} \, dx \right)^ \frac{q(1+\varepsilon)}{p}\rr^\frac{1}{{q(1+\varepsilon)}} \\
&=\sup_{\varepsilon>0 }\lr {\varepsilon^{\theta}} \sum_{k=-M} ^{M-1}\left(|E \cap I_k|\right)^{\frac{q(1+\varepsilon)}{p}}\rr^\frac{1}{{q(1+\varepsilon)}}\\
& \le \sup_{\varepsilon>0 }\lr {\varepsilon^{\theta}} 2M\rr^\frac{1}{{q(1+\varepsilon)}}
\le \lr2M\rr^{\frac{1}{q}} \lr\frac{1}{\mathrm{e} W(1 / \mathrm{e})}\rr^{\frac{\theta}{q}},
\end{align*}
where $|E \cap I_k|$ denotes Lebesgue measure of $E \cap I_k.$
Hence $ ||\chi_E||_{p, q), \theta} < \infty.$\\
\item Take $y= \lge y_k\rge_{k \in \mathbb Z},$ $y_k=$
$\begin{cases}
1 , & -M \le k <M-1 \\
0, & \text{otherwise}.
\end{cases}$\\
In view of Example 2, $y \in l^{q)',\theta}.$
Clearly, $\lge\|\chi_{E\cap I_k} \|_{L^p}\rge_{k \in \mathbb Z}=|{E\cap I_k}|_{k \in \mathbb Z}^{\frac{1}{p}} \le \lge y_k\rge_{k \in \mathbb Z}=y,$ see argument given in the above proof. By using the  Theorem \ref{th9} (lattice property of the small Lebesgue sequence space), we have $\big\|\|\chi_{E\cap I_k} \|_{L^p}\big\|_{l^{q)', \theta}}
\le \big\|y \big\|_{l^{q)', \theta}}.$\\
 Now consider,
\begin{align*}
\|\chi_E\|_{p,q)',\theta} &=\inf_{\substack{{\| \chi_E\chi_{I_k} \|_{L^p} = \sum_{j \in X} y_{k,j}}\\{y_{k,j}\geq 0}}} \left\{ \sum_{j \in X} \inf_{\varepsilon > 0} \varepsilon^{\frac{-\theta}{q(1+\varepsilon)}} \left( \sum_{k \in X} y_{k,j}^{(q(1+\varepsilon))'} \right)^{\frac{1}{(q(1+\varepsilon))'}} \right\}\\
&=\inf_{\substack{{\| \chi_{E\cap I_k} \|_{L^p} = \sum_{j \in X} y_{k,j}}\\{y_{k,j}\geq 0}}} \left\{ \sum_{j \in X} \inf_{\varepsilon > 0} \varepsilon^{\frac{-\theta}{q(1+\varepsilon)}} \left( \sum_{k \in X} y_{k,j}^{(q(1+\varepsilon))'} \right)^{\frac{1}{(q(1+\varepsilon))'}} \right\}\\
&=\big\|\|\chi_{E\cap I_k} \|_{L^p}\big\|_{l^{q)', \theta}}\\
&\le \big\|y \big\|_{l^{q)', \theta}},
\end{align*}
which is finite. Hence done.
 \item By using the  H\"{o}lder inequality \eqref{a6} in the grand amalgam Lebesgue function spaces, we get
\be
\nonumber \displaystyle\int_E|g(x)| d x=\int_{\mathbb R}\left|g \chi_E\right| d x
\le\|g\|_{p,q),\theta} \|\chi_E\|_{p',q)',\theta}=c_E\|g\|_{p, q), \theta},
\ee
where $c_E=\|\chi_E\|_{p',q)',\theta},$ which is finite with the preceding proof. Hence proved.
\end{enumerate}
\end{proof}
\begin{remark}
In Theorem \ref{th10}, the condition of boundedness on $E \subset \mathbb R $ is  necessary. We give an example that if   the condition of the boundedness is relaxed than it is not necessary that  $\chi_E \in l^{q), \theta}(L^p).$
Choose $ 1 \leq q < p < \infty $, $\theta =\frac{1}{2}.$ Take $\alpha = \frac{p}{q} > 1,$ and $E = \bigcup_{n \in \mathbb{N}} \left[n, n + \frac{1}{n^\alpha}\right].$ Clearly $E$ is not bounded but
Lebesgue measure $|E| = \sum_{n=1}^{\infty} n^{-\alpha} $ is finite because $\alpha>1.$ Now, 
\begin{align*}
\|\chi_E\|_{p,q),\theta} &= \sup_{\varepsilon > 0} \lr {\varepsilon^{\theta}}  \sum_{n \in \mathbb N} \left(\int_{n}^{n+1} |\chi_E|^{p} \, dx \right)^ \frac{q(1+\varepsilon)}{p}\rr^\frac{1}{{q(1+\varepsilon)}}\\
 &=\sup_{\varepsilon > 0} \lr {\varepsilon^{\theta}}  \sum_{n \in \mathbb N} \left(\int_{E\cap I_n} 1\, dx \right)^ \frac{q(1+\varepsilon)}{p}\rr^\frac{1}{{q(1+\varepsilon)}}\\ 
&=\sup_{\varepsilon > 0} \lr {\varepsilon^{\theta}}  \sum_{n \in \mathbb N} \left(|E\cap I_n|^{\frac{1}{p}}\right)^{q(1+\varepsilon)}\rr^\frac{1}{{q(1+\varepsilon)}}\\
&= \left\||E\cap I_n|^{\frac{1}{p}}\right\|_{l^{q),\theta}}\\
&=\left\|n^{\frac{-\alpha}{p}}\right\|_{l^{q),\theta}}=\|n^{-\frac{1}{q}}\|_{l^{q),\theta}}.
\end{align*}
Since $\theta<1,$ by Example 1, $\|\chi_E\|_{p,q),\theta}=\infty.$ Therefore, $\chi_E \notin l^{q), \theta}(L^p).$

\vspace{3pt}

Now we give an example that \eqref{eq21} also may or may not hold if the condition of boundedness of $E \subset \mathbb R$ is relaxed.
Let $1 \leq p < q < \infty, \, \,\theta =2.$ Take  \(\alpha = \frac{p}{q}<1\),\(\,\beta = \frac{q - \alpha}{q - 1}>1,\) $E = \bigcup_{n \in \mathbb{N}} \left[n, n + \frac{1}{n^\beta}\right].$ Clearly $E$ is not bounded but $|E| = \sum_{n=1}^{\infty} n^{-\beta}$ is finite because $\beta >1.$ Let $g = \sum_{n=1}^{\infty} n^{\frac{\beta - \alpha}{p}} \chi_{\left[n, n + \frac{1}{n^\beta}\right]}.$
\begin{align*}
\|g\|_{p,q),\theta} = & \sup_{\varepsilon > 0} \left\{ {\varepsilon^{\theta}}  \sum_{n \in \mathbb{N}} \left(\int_{E \cap I_n} |g(x)|^{p} \, dx \right)^ \frac{q(1+\varepsilon)}{p}\right\}^\frac{1}{{q(1+\varepsilon)}} \\
& = \sup_{\varepsilon > 0} \left\{ {\varepsilon^{\theta}}  \sum_{n \in \mathbb{N}} \left(n^{{\beta - \alpha}}  n^{-\beta}\right)^ \frac{q(1+\varepsilon)}{p}\right\}^\frac{1}{{q(1+\varepsilon)}}\\
&= \sup_{\varepsilon > 0} \left\{ {\varepsilon^{\theta}}  \sum_{n \in \mathbb{N}} \left(n^{\frac{-\alpha}{p}} \right)^ {q(1+\varepsilon)}\right\}^\frac{1}{{q(1+\varepsilon)}}\\
&=\|n^{-\frac{\alpha}{p}}\|_{l^{q),\theta}}=\|n^{-\frac{1}{q}}\|_{l^{q),\theta}}
\end{align*}
Since $\theta>1,$ by Example 1, $\|g\|_{p,q),\theta}<\infty.$ 
\end{remark}
\noindent But 
\[\int_{E} |g| dx = \sum_{n=1}^{\infty} n^{\frac{\beta - \alpha}{p} {-\beta}}  =\sum_{n=1}^{\infty} \lr\frac{1}{n}\rr^{{\beta}-\frac{\beta - \alpha}{p}}=\infty,\]
 because ${{\beta}-\frac{\beta - \alpha}{p}}<1.$ 

\begin{remark}
From Theorem \ref{th10}, it follows that the grand amalgam Lebesgue function space is not a Banach function space, for definition, see \cite{bs}.
\end{remark}

\section{Multiplication operator on $l^{q),\theta}(L^{p})$}

In this section, we study the multiplication operator in the frame work of    $l ^{q),\theta}(L^p)$ space. Let $g: \mathbb R \rightarrow \mathbb R$ be a measurable function. A \emph{Multiplication operator} $M_g$ is defined as $M_g(f)= fg,$ where  $f$ 
is a measurable function defined on $\mathbb R.$

\begin{theorem}
Let \( 1 \leq p,q < \infty \), $X=\mathbb Z$ and  $\theta > 0.$ Suppose that $g$is a measurable function with compact support. Then  the multiplication operator 
\be
 \nonumber M_g : l^{q),\theta}(L^p) \to l^{q),\theta}(L^p) \text{ is bounded if and only if } g \in L^\infty.
\ee
Moreover,
\[
\|M_g\| = \|g\|_{\infty}.
\]
\begin{proof}
 Assume that \( g \in L^\infty \). For $f \in l^{q),\theta}(L^p),$ consider
\begin{align*}
\|M_g f\|_{p,q),\theta} &= \sup_{\varepsilon > 0} \lr {\varepsilon^{\theta}}  \sum_{n \in  \mathbb Z} \left(\int_{n}^{n+1} |M_g(x)|^{p} \, dx \right)^ \frac{q(1+\varepsilon)}{p}\rr^\frac{1}{{q(1+\varepsilon)}} \\
&= \sup_{\varepsilon > 0} \lr {\varepsilon^{\theta}}  \sum_{n \in  \mathbb Z} \left(\int_{n}^{n+1} |(fg)(x)|^{p} \, dx \right)^ \frac{q(1+\varepsilon)}{p}\rr^\frac{1}{{q(1+\varepsilon)}} \\
&\le  \|g\|_{L^\infty}\sup_{\varepsilon > 0} \lr {\varepsilon^{\theta}}  \sum_{n \in  \mathbb Z} \left(\int_{n}^{n+1} |f(x)|^{p} \, dx \right)^ \frac{q(1+\varepsilon)}{p}\rr^\frac{1}{{q(1+\varepsilon)}} \\
&=\|g\|_{L^\infty} \|f\|_{p,q),\theta}. 
\end{align*}
Also, the operator norm of $M_g$ is
\begin{equation}\label{b1}
\|M_g\| = \sup_{f \neq 0} \frac{\|M_g f\|_{p,q),\theta}}{\|f\|_{p,q),\theta}} \leq \|g\|_{\infty}.
\end{equation}
 \noindent Conversely, assume that \( M_g \) is bounded but \( g \notin L^\infty \). Then for every \( m \in \mathbb{N} \), the set
\[ G_m = \{ x \in \mathbb{R} : |g(x)| > m \} \]
has positive measure. Since $g $ has compact support, this implies that there exists a bounded subset \( K_m \) of \( G_m \) with positive measure. Hence by Theorem \ref{th10},  \( \|\chi_{K_m}\|_{p,q),\theta}\) is finite.
Observe that
\be
\nonumber |(g \chi_{K_m})(x)| \geq m \chi_{K_m}(x),  \quad \text{for any} \,\,\,  m \in \mathbb{N}. 
\ee 
It follows that
\begin{align*}
\|g \chi_{K_m}\|_{p,q),\theta}&= \sup_{\varepsilon > 0} \lr {\varepsilon^{\theta}}  \sum_{n \in  \mathbb Z} \left(\int_{n}^{n+1} |g \chi_{K_m}(x)|^{p} \, dx \right)^ \frac{q(1+\varepsilon)}{p}\rr^\frac{1}{{q(1+\varepsilon)}}\\
& \ge \sup_{\varepsilon > 0} \lr {\varepsilon^{\theta}}  \sum_{n \in  \mathbb Z} \left(\int_{n}^{n+1} |m \chi_{K_m}(x)|^{p} \, dx \right)^ \frac{q(1+\varepsilon)}{p}\rr^\frac{1}{{q(1+\varepsilon)}}\\
&= m \|\chi_{K_m}\|_{p,q),\theta}.
\end{align*}
This implies that
\[ \|M_g \chi_{K_m}\|_{p,q),\theta} \geq m \|\chi_{K_m}\|_{p,q),\theta}, \text{ for each } \,\, m \in \mathbb N.\]
It contradicts the boundedness of \( M_g \) on $l^{q),\theta}(L^p).$ Therefore, \( g \in L^\infty .\)

Now, we shall prove that \( \|M_g\| = \|g\|_{\infty} \).\\ 
For any \( \delta > 0 \), define a set \( G = \{ x \in \mathbb{R} : |g(x)| > \|g\|_{\infty} - \delta\} \). Then \( G \) has positive measure.  Since $g $ has compact support, there exists a bounded subset \( K\) of \( G, \) and by Theorem \ref{th10},  \( \|\chi_{K}\|_{p,q),\theta}\) is finite. Observe that
\be
\nonumber |(g \chi_{K})(x)| \geq  (\|g\|_{\infty} - \delta)   \chi_{K}(x),\,\, \text{for every}\,\, x \in K.
\ee 
It follows that
\begin{align*}
\|g \chi_{K}\|_{p,q),\theta}&= \sup_{\varepsilon > 0} \lr {\varepsilon^{\theta}}  \sum_{n \in  \mathbb Z} \left(\int_{n}^{n+1} |g \chi_{K}(x)|^{p} \, dx \right)^ \frac{q(1+\varepsilon)}{p}\rr^\frac{1}{{q(1+\varepsilon)}}\\
& \ge \sup_{\varepsilon > 0} \lr {\varepsilon^{\theta}}  \sum_{n \in  \mathbb Z} \left(\int_{n}^{n+1} |(\|g\|_{\infty} - \delta) \chi_{K}(x)|^{p} \, dx \right)^ \frac{q(1+\varepsilon)}{p}\rr^\frac{1}{{q(1+\varepsilon)}}\\
&= (\|g\|_{\infty} - \delta) \|\chi_{K}\|_{p,q),\theta}.
\end{align*}
Therefore, 
\be\label{b2}
\|M_g\|  \geq \|g\|_{\infty} - \delta, \,\, \text{for all} \,\,\delta >0
\ee 
Hence, from \eqref{b1} and \eqref{b2}, we have
\[
\|M_g\| = \|g\|_{\infty}.
\]
\end{proof}
\end{theorem}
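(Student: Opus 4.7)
The plan is to split the proof into three parts: sufficiency ($g\in L^\infty \Rightarrow M_g$ bounded with $\|M_g\|\le\|g\|_\infty$), necessity ($M_g$ bounded $\Rightarrow g\in L^\infty$), and the reverse inequality $\|M_g\|\ge \|g\|_\infty$. Throughout I will exploit the monotonicity of the norm $\|\cdot\|_{p,q),\theta}$: if $|h_1|\le|h_2|$ a.e., then $\|h_1\chi_{I_k}\|_{L^p}\le\|h_2\chi_{I_k}\|_{L^p}$ for every $k$, which forces $\|h_1\|_{p,q),\theta}\le\|h_2\|_{p,q),\theta}$ by monotonicity of each step in the definition \eqref{eq10}.

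For the sufficiency, I would observe the pointwise bound $|(M_gf)(x)|^p=|f(x)g(x)|^p\le \|g\|_\infty^p|f(x)|^p$ a.e.\ and insert this into the defining expression \eqref{eq10}. Pulling the constant $\|g\|_\infty$ outside the integral, the sum, and the supremum in $\varepsilon$ yields $\|M_g f\|_{p,q),\theta}\le \|g\|_\infty\|f\|_{p,q),\theta}$, and hence $\|M_g\|\le\|g\|_\infty$. This is the routine half and should present no difficulty.

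For the necessity, I would proceed by contrapositive. If $g\notin L^\infty$, then for each $m\in\mathbb N$ the superlevel set $G_m=\{x:|g(x)|>m\}$ has positive Lebesgue measure; since $g$ has compact support, $G_m$ is itself contained in a bounded set, so I may pick a bounded measurable $K_m\subset G_m$ with $0<|K_m|<\infty$. By Theorem \ref{th10}(a), $\chi_{K_m}\in l^{q),\theta}(L^p)$ with finite nonzero norm. The key pointwise inequality is $|(g\chi_{K_m})(x)|\ge m\chi_{K_m}(x)$, which combined with the monotonicity discussed above gives $\|M_g\chi_{K_m}\|_{p,q),\theta}\ge m\|\chi_{K_m}\|_{p,q),\theta}$. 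This forces the operator norm of $M_g$ to exceed every $m\in\mathbb N$, contradicting boundedness. The same scheme, applied to $G_\delta=\{x:|g(x)|>\|g\|_\infty-\delta\}$ and a bounded subset $K\subset G_\delta$ of positive measure, yields $\|M_g\|\ge\|g\|_\infty-\delta$ for every $\delta>0$, so $\|M_g\|\ge\|g\|_\infty$ upon letting $\delta\to 0$. Together with the reverse inequality from the sufficiency part, this establishes $\|M_g\|=\|g\|_\infty$.

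The only delicate point is ensuring that the test functions $\chi_{K_m}$ and $\chi_K$ lie in $l^{q),\theta}(L^p)$ with strictly positive finite norm; this is precisely why the compact support hypothesis on $g$ is invoked (so that $K_m$, $K$ can be taken bounded), and why Theorem \ref{th10}(a) is the critical input—without boundedness the remark following Theorem \ref{th10} shows that $\chi_E$ may fail to lie in $l^{q),\theta}(L^p)$. I do not expect genuine technical obstacles beyond invoking monotonicity of the norm and Theorem \ref{th10}(a); the argument is structurally identical to the standard proof for $M_g$ on classical $L^p$ spaces, with the grand amalgam norm replacing the $L^p$ norm.
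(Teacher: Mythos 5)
Your proposal is correct and follows essentially the same route as the paper: the pointwise bound $|fg|\le\|g\|_\infty|f|$ with monotonicity of the norm for sufficiency, and superlevel sets $G_m$ (respectively $G_\delta$) together with the compact-support hypothesis and Theorem \ref{th10}(a) to produce admissible test functions $\chi_{K_m}$, $\chi_K$ for the necessity and the equality $\|M_g\|=\|g\|_\infty$. Your explicit remark that the test functions must have strictly positive (not merely finite) norm is a small point the paper leaves implicit, but the argument is otherwise identical.
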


\begin{theorem}
Let \( 1 \leq p,q < \infty \), $X=\mathbb Z$ and  $\theta > 0.$ If \( g \in l^{q)', \theta}(L^{p'})\), then the multiplication operator 
\( M_g :l^{q), \theta}(L^p)  \to L^1(\mathbb{R}) \) is bounded and
$
\|M_g\| \leq  \|g\|_{p',q)',\theta}
,$ where $\frac{1}{p}+\frac{1}{p'}=1.$
\begin{proof}
To prove the theorem, we shall use the Closed Graph Theorem. First we shall prove that the multiplication operator is a closed operator. Let  $\{ f_n \}_{n \in \mathbb{N}}$ be a sequence in $l^{q), \theta}(L^p)$ and $\{ f_n  \}_{n \in \mathbb{N}} \to f $ in the norm of \(l^{q), \theta}(L^p)\), where \(f \in l^{q), \theta}(L^p).\)
Suppose \( M_g(f_n) = g f_n \to h \) in \( L^1(\mathbb{R}) \). By using Lemma \ref{lm 1}, $\{ f_n \}_{n \in \mathbb{N}}$ has a subsequence $\{ {f_{n_k}} \} $ which converges pointwise almost everywhere (a.e.) to $f$.
Therefore, we obtain  \( g f_{n_k} \to g f \) pointwise a.e. So \( h = g f \) pointwise a.e. This shows that 
the graph of \( M_g \) is closed, hence the multiplication operator $M_g$  is a closed operator.\\
Since $l^{q), \theta}(L^p)$and \( L^1(\mathbb{R}) \) are Banach spaces and the multiplication operator is linear, by using  Closed Graph Theorem, we have the assertion.\\  
Now to estimate the norm of $M_g,$ we shall use the 
 H\"{o}lder's inequality in the grand amalgam Lebesgue function spaces, i.e., $l^{q), \theta}(L^p)$. Consider,
\be
\nonumber\|M_g f\|_{L^1} = \|g f\|_{L^1} \leq  {\|f\|_{p,q),\theta} \|g\|_{p',q)',\theta}}
\ee
\[ 
\|M_g\| = \sup_{f \neq 0} \frac{\|M_g f\|_{L^1}}{\|f\|_{p,q),\theta}} \leq  \|g\|_{p',q)',\theta}.\]
Hence done.
\end{proof}
\end{theorem}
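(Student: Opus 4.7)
The plan is to apply the H\"{o}lder's inequality for the grand amalgam Lebesgue function spaces, established in \eqref{a6}, directly. Given \(f \in l^{q), \theta}(L^p)\) and \(g \in l^{q)', \theta}(L^{p'})\), inequality \eqref{a6} yields
\[
\|M_g f\|_{L^1} \;=\; \int_{\mathbb R} |g(x) f(x)|\, dx \;\leq\; \|g\|_{p', q)', \theta}\, \|f\|_{p, q), \theta},
\]
which simultaneously proves \(g f \in L^1(\mathbb R)\) and gives the norm estimate. Since \(M_g\) is plainly linear, the bound \(\|M_g\| \leq \|g\|_{p', q)', \theta}\) follows by taking the supremum over nonzero \(f\).

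An alternative, more structural route uses the Closed Graph Theorem, which seems to be the intended strategy. One would verify that \(M_g\) is a closed operator between the Banach spaces \(l^{q), \theta}(L^p)\) and \(L^1(\mathbb R)\). Suppose \(f_n \to f\) in \(l^{q), \theta}(L^p)\) and \(g f_n \to h\) in \(L^1(\mathbb R)\). Invoke Lemma \ref{lm 1} to extract a subsequence \(\{f_{n_k}\}\) converging pointwise a.e.\ to \(f\), so that \(g f_{n_k} \to g f\) pointwise a.e. From \(L^1\)-convergence of \(g f_n\) to \(h\), pass to a further subsequence converging pointwise a.e.\ to \(h\); comparing the two a.e.\ limits forces \(h = g f\) a.e., so the graph of \(M_g\) is closed. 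Completeness of both spaces (Theorem \ref{th2} for the domain, classical for \(L^1\)) then delivers boundedness via the Closed Graph Theorem, and the norm estimate is obtained in a concluding step by once more applying \eqref{a6}.

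Between the two routes, the direct H\"{o}lder application is cleaner and makes the norm bound immediate, whereas the CGT approach highlights the Banach-space framework built up in the earlier sections. In either case, the essential input is the H\"{o}lder inequality \eqref{a6}; since that tool has already been proved, no real obstacle remains. If any step deserves care, it is simply checking linearity and measurability of \(M_g f\), which are standard, and ensuring that Lemma \ref{lm 1} (which needs a Cauchy-type argument via the equivalent norm \(\|\cdot\|_{p,q),\theta,\varepsilon_0}\)) is applicable in the CGT variant.
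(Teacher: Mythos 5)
Your proposal is correct, and your primary route is in fact cleaner than the paper's. The paper first invokes the Closed Graph Theorem (using Lemma \ref{lm 1} to verify closedness of the graph) to establish boundedness, and only afterwards applies the H\"{o}lder inequality \eqref{a6} to estimate the operator norm. As you observe, this detour is logically unnecessary: the H\"{o}lder inequality \eqref{a6} already yields $\|M_g f\|_{L^1} \leq \|g\|_{p',q)',\theta}\,\|f\|_{p,q),\theta}$ for every $f \in l^{q),\theta}(L^p)$, which is simultaneously a proof of boundedness and the norm bound, so the CGT argument buys nothing beyond what the quantitative estimate already provides. Your sketch of the CGT variant also quietly repairs a small gap in the paper's version of that argument: the paper concludes $h = gf$ a.e.\ directly from $gf_n \to h$ in $L^1$, whereas one should (as you do) pass to a further subsequence of $\{g f_{n_k}\}$ converging pointwise a.e.\ to $h$ before identifying the two limits. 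Either route is acceptable; the direct one is preferable for economy, while the CGT route showcases the completeness results of Section 2.
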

\begin{theorem}
Let \( 1 \leq p,q < \infty \), $X=\mathbb Z$ and  $\theta > 0.$ Suppose that $g$ is a measurable function with compact support and  \( M_g \) is a bounded operator from $l^{q), \theta}(L^p)$ to $l^{q), \theta}(L^p)$. Then \( M_g \) is an isometry if and only if \( |g(x)| = 1 \) a.e.
\begin{proof}
Let $M_g$ be an isometry and \( |g(x)| \neq 1 \) a.e., then there is a measurable subset $C \subset \mathbb{R}$ of positive measure such that $|g(x)| < 1,$ for $ x \in C$ or there exists a measurable subset $D \subset \mathbb R$ of positive measure such that $|g(x)| > 1,$ for  $ x \in D.$

 If $|g(x)| < 1,$ we  can choose a subset  $C_\delta = \{x \in \mathbb{R} : |g(x)| < 1 - \delta\}$ such that $|C_\delta|>0,$ for some $\delta > 0$. Since $g $ has compact support, this implies that there exists a bounded subset \( E \) of \( C_\delta, \) and by Theorem \ref{th10},  \( \|\chi_{E}\|_{p,q),\theta}\) is finite.
Observe that
\be
\nonumber |(g \chi_{E})(x)| < (1 - \delta) \chi_{E}(x),  \quad \text{for some} \,\,\, \delta >0. 
\ee 
It follows that,
\begin{align*} 
\|M_g \chi_E\|_{p,q),\theta}&= \sup_{\varepsilon > 0} \lr {\varepsilon^{\theta}}  \sum_{n \in \mathbb{Z}} \left(\int_{n}^{n+1} |(g \chi_{E})(x)|^{p} \, dx
\right)^ \frac{q(1+\varepsilon)}{p}\rr^\frac{1}{{q(1+\varepsilon)}}\\
&< (1-\delta)\sup_{\varepsilon > 0} \lr {\varepsilon^{\theta}}  \sum_{n \in \mathbb{Z}} \left(\int_{n}^{n+1} | \chi_{E}(x)|^{p} \, dx
\right)^ \frac{q(1+\varepsilon)}{p}\rr^\frac{1}{{q(1+\varepsilon)}}\\
&=(1-\delta)\| \chi_E\|_{p,q),\theta}<\| \chi_E\|_{p,q),\theta}.
\end{align*}
Since  $\delta$ is arbitrary, we have $\|M_g \chi_E\|_{p,q),\theta}< \|\chi_E\|_{p,q),\theta}.$ It is a  contradiction that  $M_g$ is an isometry.
Similarly, for the other case, just by following the similar steps,  we get the contradiction. Hence \( |g(x)| = 1 \) a.e.\\
Conversely, let \( |g(x)| = 1 \) a.e. then
\begin{align*}
\|M_g f\|_{p,q),\theta}=&  \sup_{\varepsilon > 0} \lr {\varepsilon^{\theta}}  \sum_{n \in \mathbb{Z}} \left(\int_{n}^{n+1} |(fg )(x)|^{p} \, dx
\right)^ \frac{q(1+\varepsilon)}{p}\rr^\frac{1}{{q(1+\varepsilon)}}\\
 =& \sup_{\varepsilon > 0} \lr {\varepsilon^{\theta}}  \sum_{n \in \mathbb{Z}} \left(\int_{n}^{n+1} |f (x)|^{p} \, dx
\right)^ \frac{q(1+\varepsilon)}{p}\rr^\frac{1}{{q(1+\varepsilon)}}
=\|f\|_{p,q),\theta}.
\end{align*}
Hence  $M_g$ is an isometry.
\end{proof}
\end{theorem}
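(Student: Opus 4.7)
My plan is to separate the two implications cleanly. For sufficiency (assume $|g(x)|=1$ a.e.), the proof is essentially a substitution: for any $f\in l^{q),\theta}(L^p)$ we have $|f(x)g(x)|^p=|f(x)|^p$ almost everywhere, hence $\int_n^{n+1}|(fg)(x)|^p\,dx=\int_n^{n+1}|f(x)|^p\,dx$ for every $n\in\mathbb Z$. Substituting into the definition \eqref{eq10} shows $\|M_g f\|_{p,q),\theta}=\|f\|_{p,q),\theta}$ term by term, so $M_g$ is an isometry.

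For necessity I would argue by contradiction. Suppose $M_g$ is an isometry but $|g|\neq 1$ on a set of positive measure. Then at least one of $C=\{x:|g(x)|<1\}$ or $D=\{x:|g(x)|>1\}$ has positive measure, and writing $C=\bigcup_{k\geq 1}\{|g|<1-1/k\}$ (and similarly for $D$), by continuity of measure there exists $\delta>0$ with $C_\delta=\{|g|<1-\delta\}$ of positive measure, or $D_\delta=\{|g|>1+\delta\}$ of positive measure. Since $g$ has compact support, both $C_\delta$ and $D_\delta$ are contained in that compact set, so I can pick a bounded subset $E\subset C_\delta$ (resp.\ $F\subset D_\delta$) with positive measure. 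By Theorem \ref{th10}(a), $\|\chi_E\|_{p,q),\theta}$ is finite, and it is strictly positive because every term in the defining supremum is non-negative with at least one $I_n$ satisfying $|E\cap I_n|>0$.

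On $E\subset C_\delta$ the pointwise estimate $|g\chi_E|\leq (1-\delta)\chi_E$ propagates through the local $L^p$-integrals $\int_n^{n+1}|g\chi_E|^p\,dx\leq (1-\delta)^p\int_n^{n+1}\chi_E^p\,dx$ and, by pulling the factor $1-\delta$ out of the definition \eqref{eq10}, gives
\[
\|M_g\chi_E\|_{p,q),\theta}\;\leq\;(1-\delta)\,\|\chi_E\|_{p,q),\theta}\;<\;\|\chi_E\|_{p,q),\theta},
\]
contradicting the isometric property. The case $D_\delta$ is symmetric: on $F\subset D_\delta$, the reverse pointwise estimate $|g\chi_F|\geq (1+\delta)\chi_F$ yields $\|M_g\chi_F\|_{p,q),\theta}\geq (1+\delta)\|\chi_F\|_{p,q),\theta}>\|\chi_F\|_{p,q),\theta}$, again a contradiction. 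Hence $|g(x)|=1$ almost everywhere.

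The only substantive obstacle is manufacturing an \emph{admissible} test function with strictly positive \emph{finite} norm to witness the strict inequality; this is exactly where the compact-support hypothesis on $g$ is essential, since it guarantees that $C_\delta$ and $D_\delta$ are bounded and Theorem \ref{th10}(a) then provides the required finiteness of $\|\chi_E\|_{p,q),\theta}$ and $\|\chi_F\|_{p,q),\theta}$. Once these test functions are in hand, the remaining computation is a routine comparison of local $L^p$-norms propagated through the grand amalgam norm.
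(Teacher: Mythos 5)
Your proposal is correct and follows essentially the same route as the paper: sufficiency by direct substitution of $|fg|^p=|f|^p$ into the defining supremum, and necessity by contradiction via the sets $C_\delta=\{|g|<1-\delta\}$, $D_\delta=\{|g|>1+\delta\}$, a bounded positive-measure subset supplied by the compact-support hypothesis, and Theorem \ref{th10} to get an admissible test function $\chi_E$. Your explicit remarks that $\|\chi_E\|_{p,q),\theta}>0$ (needed for the strict inequality $(1-\delta)\|\chi_E\|_{p,q),\theta}<\|\chi_E\|_{p,q),\theta}$) and your written-out treatment of the case $D_\delta$ are small tightenings of details the paper leaves implicit.
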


\bigskip 
 
\emph{Acknowledgement}: The first author acknowledges the  National Board of Higher Mathematics research project no. 02011/14/2023 NBHM(R.P)/R\&D II/5951, India.\\
Second author acknowledges University Grants Commission (UGC) for UGC - JRF fellowship (NTA Ref.No.:221610184366).

\bigskip

\noindent Monika Singh,
 Department of Mathematics\\
Lady Shri Ram College for Women (University of Delhi)\\
Lajpat Nagar, Delhi - 110024, India\\
(Email : monikasingh@lsr.du.ac.in)

\bigskip
\noindent Jitendra Kumar,
  Department of Mathematics\\
University of Delhi, Delhi - 110007, India\\
(Email: jk5129969@gmail.com)

\end{document}